\documentclass[11pt]{amsart}

\usepackage[T1]{fontenc}
\usepackage[utf8]{inputenc}

\usepackage{amssymb,mathtools}

\usepackage{microtype}

\usepackage{enumitem}
\usepackage{subcaption}
\usepackage[colorlinks=true,linkcolor=blue,citecolor=blue,urlcolor=blue]{hyperref}

\usepackage{dsfont}
\usepackage{mathrsfs}

\newtheorem{theorem}{Theorem}[section]

\newtheorem{lemma}[theorem]{Lemma}
\newtheorem{corollary}[theorem]{Corollary}
\theoremstyle{definition}
\newtheorem{definition}[theorem]{Definition}
\theoremstyle{remark}
\newtheorem{remark}[theorem]{Remark}

\theoremstyle{definition}
\newtheorem{example}{Example}[section]

\newcommand{\R}{\mathbb{R}}
\newcommand{\E}{\mathbb{E}}
\newcommand{\MoreauYosida}[2]{\operatorname{e}_{#2} #1}
\newcommand{\Prox}[2]{\operatorname{prox}_{#2 #1}}
\newcommand{\proj}{\operatorname{proj}}
\newcommand{\prox}{\operatorname{prox}}

\newcommand{\dom}{\operatorname{dom}}

\title[Rates for Stochastic Proximal and Projection Estimators]{Convergence Rates for Stochastic Proximal and Projection Estimators}

\author{Diego Morales}
\address{Departamento de Ingenier\'ia Matem\'atica, Universidad de Chile, Santiago, Chile}
\email{dhmorales@dim.uchile.cl}

\author{Pedro P\'erez-Aros}
\address{Departamento de Ingenier\'ia Matem\'atica and Center for Mathematical Modeling (CMM), Universidad de Chile, Santiago, Chile}
\email{pperez@dim.uchile.cl}

\author{Emilio Vilches}
\address{Instituto de Ciencias de la Ingenier\'ia, Universidad de O'Higgins, Rancagua, Chile; and Center for Mathematical Modeling (CMM), Universidad de Chile, Santiago, Chile}
\email{emilio.vilches@uoh.cl}

\subjclass[2020]{Primary 49J53; Secondary 49J42, 90C30, 90C25} 
\keywords{proximal mapping, weakly convex functions, stochastic approximation, convergence rates}

\date{\today}

\begin{document}

\begin{abstract}
In this paper, we establish explicit convergence rates for the stochastic smooth approximations of infimal convolutions introduced and developed in \cite{MR4581306,MR4923371}. In particular, we quantify the convergence of the associated barycentric estimators toward proximal mappings and metric projections. We prove a dimension-explicit $\sqrt{\delta}$ bound, with explicit constants for the proximal mapping, in the $\rho$-weakly convex (possibly nonsmooth) setting, and we also obtain a dimension-explicit $\sqrt{\delta}$ rate for the metric projection onto an arbitrary convex set with nonempty interior. Under additional regularity, namely $C^{2}$ smoothness with globally Lipschitz Hessian, we derive an improved linear $O(\delta)$ rate with explicit constants, and we obtain refined projection estimates for convex sets with local $C^{2,1}$ boundary. Examples demonstrate that these rates are optimal.
\end{abstract}

\maketitle

\section{Introduction}

Proximal mappings and metric projections are fundamental concepts in variational analysis and optimization. Introduced by J.-J.~Moreau in \cite{MR201952}, proximal mappings have since become central tools in modern optimization, notably through proximal algorithms and operator-splitting methods and their many variants (see, e.g., \cite{MR3616647}). Given a proper function \(f:\R^n\to\R\cup \{+\infty\}\) and \(\lambda>0\), the proximal point \(\prox_{\lambda f}(x)\) is the unique minimizer
of \(y\mapsto f(y)+\frac{1}{2\lambda}\|x-y\|^2\) whenever \(f\) is \(\rho\)-weakly convex and \(\lambda\in(0,1/\rho)\).
When \(f=\iota_C\) is the indicator of a closed convex set \(C\), the proximal mapping reduces to the metric projection
\(\proj_C(x)\). In many applications, however, evaluating \(\prox_{\lambda f}(x)\) or \(\proj_C(x)\) exactly is computationally
expensive, and in black-box settings only function values (possibly noisy) are available.

This paper studies a family of \emph{stochastic} (zeroth-order) estimators for proximal points and projections based on
Gaussian perturbations and exponential reweighting. For \(\delta>0\) and $\lambda>0$, we define the barycenter
\begin{equation}\label{eq:intro_mdelta}
m_\delta(x)
:=\frac{\E_{Y\sim\mathcal{N}(x,\delta\lambda I)}\!\big[Y\,\exp(-f(Y)/\delta)\big]}
{\E_{Y\sim\mathcal{N}(x,\delta\lambda I)}\!\big[\exp(-f(Y)/\delta)\big]}.
\end{equation}
When \(f=\iota_C\), the estimator becomes the conditional mean
\[
p_\delta(x)=\E\big[Y\mid Y\in C\big],\qquad Y\sim\mathcal{N}(x,\delta I),
\]
which can be interpreted as a ``smoothed projection'' onto \(C\).

The estimators \eqref{eq:intro_mdelta} arise naturally from Laplace-type asymptotics for integrals with kernel
\(\exp(-g/\delta)\), where \(g(y):=f(y)+\frac{1}{2\lambda}\|x-y\|^2\). Two recent strands of work developed this idea. First, motivated by Hamilton-Jacobi equations, \cite{MR4581306} proposed the HJ-Prox method and
proved that, for \(\rho\)-weakly convex objectives under mild regularity, the barycenter converges to the proximal point as \(\delta\to0^+\).
Second, \cite{MR4923371} places the relationship between infimal convolutions and self-normalized Laplace approximation
front and center, extending the construction beyond the quadratic kernel and establishing asymptotic validity under
weak local assumptions; in particular, their framework includes smoothed set projections
\(p_{\delta}(x)=\E[Y\mid Y\in C]\) and proves \(p_{\delta(x)}\to \operatorname{proj}_C(x)\) as \(\delta\to0^+\) under mild local geometric
conditions near the projection point.

While these references provide a compelling unifying viewpoint and broad asymptotic consistency results, they leave open
a basic quantitative question that is decisive for algorithmic use: \emph{how fast} do these stochastic estimators converge,
and how do the constants depend on the ambient dimension and on the geometry/smoothness of the problem?

\subsection*{Our contributions: explicit and sharp convergence rates}
The main goal of this work is to provide \emph{non-asymptotic}, \emph{dimension-explicit} convergence rates for
\eqref{eq:intro_mdelta} and its projection specialization, and to identify regimes where improved rates hold.

\noindent\textbf{(i) Nonsmooth weakly convex case: a sharp \(\sqrt{\delta}\) rate.}
Assume \(f\) is proper, lower semicontinuous, and \(\rho\)-weakly convex, and fix \(\lambda\in(0,1/\rho)\) with
\(\mu:=\frac{1}{\lambda}-\rho>0\). Under the mild geometric condition that \(\dom f\) has nonempty interior,
we prove the global bound (valid for every \(\delta>0\))
\[
\|m_\delta(x)-\prox_{\lambda f}(x)\|\le \sqrt{\frac{n\delta}{\mu}}.
\]
This estimate is complemented by explicit examples showing that the \(\sqrt{\delta}\) order cannot be improved in general.

\noindent\textbf{(ii) Smooth case: an \(O(\delta)\) rate with explicit constants.}
If \(f\in C^2(\R^n)\) is \(\rho\)-weakly convex and \(\nabla^2 f\) is globally Lipschitz with constant \(L\), we establish the
linear-rate refinement
\[
\|m_\delta(x)-\prox_{\lambda f}(x)\|\le \frac{nL}{\mu^2}\,\delta,
\qquad \mu=\frac{1}{\lambda}-\rho,
\]
again for every \(\delta>0\). This result quantifies a phenomenon suggested by the smoothing interpretations in
\cite{MR4581306,MR4923371}: additional curvature regularity improves the bias of the barycentric estimator.

\noindent\textbf{(iii) Projection corollaries and boundary-sensitive refinements.}
Taking \(f=\iota_C\) and $\lambda=1$ yields a quantitative convergence estimate for convex projections:
\[
\|p_\delta(x)-\proj_C(x)\|\le \sqrt{n\,\delta}.
\]
Beyond the convex Lipschitz regime, we analyze the case of convex sets with \(C^{2,1}\) boundary charts at the projection point and obtain a refined \(O(\delta)\) expansion.
This provides a bridge between the purely asymptotic projection consistency statement in \cite{MR4923371} and the boundary-geometry-dependent expansions needed in fine asymptotic analysis.

The remainder of the paper is organized as follows.
Section~\ref{preliminaries} collects the mathematical preliminaries used throughout.
Section~\ref{Section3} establishes the \(\sqrt{\delta}\) bound for nonsmooth weakly convex functions,
provides sharpness examples, and derives the corollary for convex projections.
Section~\ref{Section4} proves the \(O(\delta)\) rate under \(C^2\) regularity with Lipschitz continuous Hessian
and develops a refined analysis for convex sets with \(C^{2,1}\) boundaries. The paper ends with some concluding remarks and possible lines of future research.

\section{Mathematical Preliminaries}\label{preliminaries}
Throughout this paper, we work in the finite-dimensional Hilbert space $\mathbb{R}^n$ endowed with the inner product $\langle \cdot, \cdot\rangle$ and the associated norm $\Vert \cdot\Vert$. The closed unit ball in $\mathbb{R}^n$ is denoted by $\mathbb{B}$. The open ball of radius $R$ centered at $x$ is denoted by $B_R(x)$. For simplicity, the open ball of radius $R$ centered at $0$ is denoted by $B_R$. Moreover, $\mathbb{S}^{n-1}$ denotes the unit sphere in $\mathbb{R}^n$.

We use the convention $\exp(+\infty) =+\infty $ and $\exp( -\infty ) = 0$.  

\noindent For a set $A\subset \mathbb{R}^n$, the \emph{indicator function} of $A$ is defined by  
\begin{equation*}
\iota_A(x):=\left\{ \begin{array}{cc}
0 &\text{ if } x\in A,\\
+\infty &\text{ if } x\notin A.
\end{array}\right.
\end{equation*}
The \emph{distance function} from $x\in \mathbb{R}^n$  to $A$ is given by $$d_{A}(x)=\inf\left\{  \|  x - y\| : y\in A\right\}.$$
 Moreover, if $A$ is nonempty, closed and convex, the \emph{projection} of $x\in \mathbb{R}^n$ onto $A$ is denoted by $\operatorname{proj}_{A} (x)$. Given a function $g\colon \mathbb{R}^n \rightarrow \mathbb{R}\cup \{+\infty \}$, the \emph{domain} of $g$ is
\begin{equation*}
\operatorname{dom}(g):=\{x\in \mathbb{R}^n: g(x)<+\infty \}.
\end{equation*}
We say that $g$ is \emph{proper} if $\operatorname{dom}g\neq
\emptyset$. Given $\rho\geq 0$, a function $g\colon \R^n\to \mathbb{\R}\cup \{+\infty \}$ is called $\rho$-weakly convex if $g+\frac{\rho}{2}\|\cdot\|^2$ is convex. For $\mu>0$, we say that $g$ is $\mu$-strongly convex if $g-\frac{\mu}{2}\Vert \cdot\Vert^2$ is convex.

The Moreau envelope of index $\lambda>0$ of a function $g\colon \R^n\to \mathbb{R}\cup \{+\infty\}$ is denoted by $\MoreauYosida{g}{\lambda}\colon \mathbb{R}^n \to \mathbb{R}$ and is defined  by 
\begin{equation*}
    \MoreauYosida{g}{\lambda}(x):=\inf_{y\in \mathbb{R}^n} \left(	g(y) + \frac{1}{2\lambda } \|  x - y\|^2		\right) \textrm{ for all } x\in \mathbb{R}^n.
\end{equation*}
 If $g$ is proper, lower semicontinuous, and $\rho$-weakly convex, and if $0<\lambda<\frac{1}{\rho}$ (with the convention $1/0=+\infty$), then the above infimum is attained at a unique point $\Prox{g}{\lambda}(x)\in \mathbb{R}^n$, and
\begin{equation*}
\begin{aligned}
\MoreauYosida{g}{\lambda}(x)&=g(\Prox{g}{\lambda}(x))+\frac{1}{2\lambda}\Vert x-\Prox{g}{\lambda}(x)\Vert^2.
\end{aligned}
\end{equation*}
In this case, the operator $x\mapsto \Prox{g}{\lambda}(x)$ is everywhere defined and is called the \emph{proximal operator} of $g$ of index $\lambda$. Moreover, $\MoreauYosida{g}{\lambda}$ is continuously differentiable and, for each  $x\in \mathbb{R}^n$, 
\begin{equation*}
\nabla \MoreauYosida{g}{\lambda}(x)=\frac{1}{\lambda}(x-\Prox{g}{\lambda}(x)).
\end{equation*}

We refer to \cite{MR3288271,MR3616647} for more details.

Let $\delta>0$, and let $g\colon \mathbb{R}^n \to \mathbb{R}\cup \{+\infty\}$ be a proper function such that $e^{-g/\delta}$ is integrable and
$$
\int_{\mathbb{R}^n}e^{-g(y)/\delta}dy>0.
$$
Under these assumptions, $e^{-g/\delta}$ induces a probability measure. We denote by
$$
\mathbb{E}_{\sigma_{\delta}}[w(y)]=\frac{\int_{\mathbb{R}^n} w(y)e^{-g(y)/\delta}dy}{\int_{\mathbb{R}^n}e^{-g(y)/\delta}dy}
$$
the expectation of a given function $w$ with respect to the probability measure $\sigma_\delta$, which is formally defined for every Lebesgue-measurable set $A$ as
$$ \sigma_\delta (A) := \frac{\int_{A} e^{-g(y)/\delta}dy}{\int_{\mathbb{R}^n}e^{-g(y)/\delta}dy}.  $$

\section{Rates of Convergence for Nonsmooth Functions}\label{Section3}
Let \(f:\mathbb{R}^n\to \mathbb{R}\cup\{+\infty\}\) be a \(\rho\)-weakly convex function with \(\rho\ge 0\). 
The goal of this section is to establish an explicit convergence rate for the map
\[
m_{\delta}(x)
:=\frac{\mathbb{E}_{y\sim \mathcal{N}(x,\delta \lambda I)}\!\bigl[y\, \exp(-f(y)/\delta)\bigr]}
{\mathbb{E}_{y\sim \mathcal{N}(x,\delta \lambda I)}\!\bigl[\exp(-f(y)/\delta)\bigr]},
\]
and to quantify its approximation of the proximal point:
\[
m_{\delta}(x)\;\longrightarrow\;\operatorname{prox}_{\lambda f}(x)\qquad \text{as }\delta\downarrow 0.
\]
It is straightforward to verify that if $f$ is $\rho$-weakly convex for some $\rho\ge 0$ and $\operatorname{dom} f$ has nonempty interior, then $m_\delta$ is well-defined for every $\delta>0$ and $0<\lambda <1/\rho$.

The following result provides a convergence rate for this barycenter toward the proximal point.

\begin{theorem}\label{convergence}
   Let $f\colon \mathbb{R}^n \to \mathbb{R}\cup \{+\infty\}$ be proper, lower semicontinuous,  and $\rho$-weakly convex for some $\rho\geq 0$.  Assume that $\operatorname{dom}f$ has nonempty interior. Fix $x\in \mathbb{R}^n$ and $0<\lambda <1/\rho$, and set $\mu:=\frac{1}{\lambda}-\rho>0$. 
Then, for every $\delta>0$,
$$
\Vert m_{\delta}(x)-\operatorname{prox}_{\lambda f}(x)\Vert \leq \sqrt{\frac{n\delta}{\mu}}.
$$
\end{theorem}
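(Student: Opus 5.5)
Write $g(y):=f(y)+\frac{1}{2\lambda}\|x-y\|^2$. Expanding the Gaussian density shows that $m_\delta(x)=\E_{\sigma_\delta}[y]$, where $\sigma_\delta$ is the probability measure with density proportional to $e^{-g/\delta}$ introduced in Section~\ref{preliminaries}. The Gaussian normalizing constants cancel in the ratio. The measure is well defined because $g$ is $\mu$-strongly convex, hence bounded below by a quadratic with positive curvature, and because $\dom g=\dom f$ has nonempty interior, so the denominator is positive. Set $p:=\prox_{\lambda f}(x)$, the unique minimizer of $g$.

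By Jensen's inequality,
\[
\|m_\delta(x)-p\|^2=\|\E_{\sigma_\delta}[y-p]\|^2\le \E_{\sigma_\delta}\|y-p\|^2 .
\]
It therefore suffices to show $\E_{\sigma_\delta}\|y-p\|^2\le n\delta/\mu$. The plan is to combine strong convexity with an integration-by-parts identity, in the spirit of the Brascamp--Lieb or Poincaré inequality for log-concave measures.

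\emph{Smooth heuristic.} If $g$ were $C^1$ with full domain, integration by parts would give
\[
\E_{\sigma_\delta}\langle \nabla g(y),y-p\rangle=\delta\,\E_{\sigma_\delta}[\operatorname{div}(y-p)]=n\delta .
\]
Strong monotonicity, together with $\nabla g(p)=0$, gives $\langle\nabla g(y),y-p\rangle\ge\mu\|y-p\|^2$. These two facts together yield $\mu\E\|y-p\|^2\le n\delta$.

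\emph{Nonsmooth case.} For general $g$, which may take the value $+\infty$, I would avoid derivatives of $g$ through a radial (scaling) argument. Set $h(y):=g(y)-g(p)\ge0$. Strong convexity and $0\in\partial g(p)$ give, for $t\ge1$,
\[
h(p+t(y-p))\ge t\,h(y)+\tfrac{\mu}{2}t(t-1)\|y-p\|^2 .
\]
This follows from convexity of $h-\frac{\mu}{2}\|\cdot-p\|^2$ along the ray, which has a minimum-type inequality at $p$. Define
\[
Z(t):=\int e^{-h(p+t(y-p))/\delta}\,dy=t^{-n}Z(1),
\]
where the identity comes from the change of variables $y\mapsto p+t(y-p)$. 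Then $\frac{d}{dt}\log Z(t)\big|_{t=1}=-n$. The derivative can be controlled by one-sided difference quotients of $h$ along rays. By convexity, $\frac{h(p+t(y-p))-h(y)}{t-1}$ is nondecreasing in $t$ and bounded below by $h(y)+\frac{\mu}{2}\|y-p\|^2\ge \frac{\mu}{2}\|y-p\|^2$; more precisely, the right derivative at $t=1$ is at least $h(y)+\frac{\mu}{2}\|y-p\|^2$. Comparing $Z(t)$ with $Z(1)$ for $t\downarrow1$, using Fatou or monotone convergence on $\frac{1-e^{-(h_t-h)/\delta}}{t-1}$, should give
\[
n\ge \frac1\delta\,\E_{\sigma_\delta}\Big[h(y)+\tfrac{\mu}{2}\|y-p\|^2\Big].
\]
This only yields $\E\|y-p\|^2\le 2n\delta/\mu$, so it loses a factor of $2$. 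To recover the sharp constant, I would use the stronger directional bound $h'(y;y-p)\ge h(y)+\frac{\mu}{2}\|y-p\|^2$. Since $\E h\ge0$, this gives only half of the target, so I need a second source of curvature instead. The fix is the monotonicity inequality $h'(y;y-p)\ge \mu\|y-p\|^2$, obtained by pairing the directional derivative at $y$ with the one at $p$, where $h'(p;\cdot)\ge0$. Applied to the convex function along the segment from $p$ to $y$, this gives $h'(y;y-p)\ge h'(p;y-p)+\mu\|y-p\|^2\ge\mu\|y-p\|^2$. Inserting it into the scaling identity yields $n\delta\ge\mu\E_{\sigma_\delta}\|y-p\|^2$, as required.

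\emph{Main obstacle.} The hardest step is justifying the differentiation of $Z(t)$ at $t=1$ when $g$ is merely lower semicontinuous and extended-valued. Points with $y\in\dom g$ but $p+t(y-p)\notin\dom g$ for all $t>1$ lie on the boundary of a convex set, which has Lebesgue measure zero; I would exclude them from the integral. On the remaining set, the difference quotients are monotone in $t$, so monotone convergence applies. Taking $t\downarrow1$ gives
\[
\E_{\sigma_\delta}\!\left[\tfrac{1-e^{-(h_t-h)/\delta}}{t-1}\right]=\tfrac{1-t^{-n}}{t-1}\to n,
\]
and Fatou's lemma on the left side yields the lower bound $\frac1\delta\E[h'(y;y-p)]\le n$.
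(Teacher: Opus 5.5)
Your proof is correct. It reaches the key inequality $\E_{\sigma_\delta}[h'(y;y-p)]\le n\delta$ by a genuinely different mechanism than the paper. From there on (strong monotonicity, then Jensen) it coincides with the paper's last steps.

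The paper integrates the field $F(y)=(y-p)e^{-g(y)/\delta}$ over $\operatorname{int}(\dom f)\cap B_{R_k}$. It uses the Gauss--Green formula for Sobolev fields on Lipschitz domains (Lemma~\ref{co-area}, with traces and a co-area choice of radii making $\partial D\cap\partial B_R$ $\mathcal{H}^{n-1}$-null). It drops the flux through $\partial\dom f$, because $\langle y-p,\nu_D\rangle\ge 0$ for a convex $D\ni p$. It kills the flux through $\partial B_{R_k}$ by Gaussian decay.

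Your identity $Z(t)=t^{-n}Z(1)$ is the integrated form of $\operatorname{div}(y-p)=n$, since dilations about $p$ are the flow of that field. Fatou's lemma takes the place of the divergence theorem. The mass Fatou can lose comes from points near $\partial\dom g$, where $h_t$ jumps to $+\infty$, and it plays the role of the nonnegative boundary flux the paper discards.

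What your route buys is economy:
\begin{itemize}
\item it needs only one-sided directional derivatives, which exist at every point of $\operatorname{int}\dom g$;
\item the only exceptional set is $\partial\dom g$, which is Lebesgue-null;
\item it requires no trace theory, no closedness of $\dom f$ (Lemma~\ref{co-area} is stated for closed $D$), and no integrability of $\nabla g$ up to the boundary.
\end{itemize}
That last point matters. The paper infers $\nabla g\in L^1(\Omega_R)$ from local integrability in $\operatorname{int}D$. This fails, for example, for $f(y)=1/y$ on $(0,\infty)$, although $F$ itself remains in $W^{1,1}$ there. Your argument never meets this issue. The paper's route, in turn, displays the slack as an explicit boundary term.

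Two presentational points. First, the detour through $h(y)+\frac{\mu}{2}\|y-p\|^2$ and the factor-$2$ discussion can simply be deleted. Second, the integrand $\frac{1-e^{-(h_t-h)/\delta}}{t-1}$ is not monotone in $t$ in general, so monotone convergence does not apply to it. The limit must be justified by Fatou alone, using nonnegativity plus pointwise convergence to $\frac{1}{\delta}h'(y;y-p)$ on $\operatorname{int}\dom g$. That is what your final step actually does.
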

\begin{proof} 
Fix $x\in \mathbb{R}^n$ and assume that $0<\lambda <1/\rho$. Consider 
\begin{equation*}
g(y):=f(y)+\frac{1}{2\lambda}\Vert x-y\Vert^2.
\end{equation*}
Since $f$ is $\rho$-weakly convex, it follows that $g$ is $\mu$-strongly convex and
\begin{equation}\label{growth}
    g(y)\geq g(\operatorname{prox}_{\lambda f}(x))+\frac{\mu}{2}\Vert y-\operatorname{prox}_{\lambda f}(x)\Vert^2 \textrm{ for all } y\in \mathbb{R}^n.
\end{equation}
Consider the vector field
$$
F(y):=(y-\operatorname{prox}_{\lambda f}(x))e^{-g(y)/\delta}.
$$
Now, let us show that  $\mathbb{E}_{\sigma_{\delta}}\left[\langle y-\operatorname{prox}_{\lambda f}(x),\nabla g(y)\rangle \right]\leq n\delta$.
 
Indeed, since $y\mapsto g(y)$ is proper, lsc, and convex, its domain is convex; in particular, $D:=\operatorname{dom}f=\operatorname{dom}g$ is convex with nonempty interior (by assumption). On $\operatorname{int}(D)$, every proper lsc convex function is locally Lipschitz, hence differentiable a.e. and belongs to $W^{1,1}_{\operatorname{loc}}(\operatorname{int}(D))$. Consequently, $\nabla g\in L^1_{\operatorname{loc}}(\operatorname{int}(D);\mathbb{R}^n)$, and for a.e. $y \in \operatorname{int}(D)$,
$$
\operatorname{div}F(y)=ne^{-g(y)/\delta}-\frac{1}{\delta}\langle y-\operatorname{prox}_{\lambda f}(x),\nabla g(y)\rangle e^{-g(y)/\delta}.
$$
Fix $R>0$ and set $\Omega_R:=\operatorname{int}(D) \cap B_R $. Since 
$$\Vert y-\operatorname{prox}_{\lambda f}(x)\Vert \leq R+\Vert \operatorname{prox}_{\lambda f}(x)\Vert =:C_R \textrm{ on } B_R
$$
and $e^{-g(y)/\delta}\leq e^{-\inf_{B_R}g/\delta}=:M_{R,\delta}<\infty$, it follows that for a.e. $y\in \Omega_{R}$,
$$
\vert \operatorname{div}F(y)\vert \leq nM_{R,\delta}+\frac{M_{R,\delta}C_{R}}{\delta}\Vert \nabla g(y)\Vert,
$$
Since $\nabla g\in L^1_{\operatorname{loc}}(\operatorname{int}(D);\mathbb{R}^n)$, we have $\nabla g\in L^1(\Omega_R;\mathbb{R}^n)$, and then $\operatorname{div}F\in L^1(\Omega_R)$. For $R>0$, apply the divergence theorem to $\Omega_R$:
\begin{equation*}
    \begin{aligned}
\int_{\Omega_R}\operatorname{div}F(y)dy&=\int_{\partial \Omega_R} F\cdot \nu\, d\mathcal{H}^{n-1}.
    \end{aligned}
\end{equation*}
Moreover, by Lemma \ref{co-area}, we can select $R_k\to \infty$ such that
\[
\int_{\partial \Omega_{R_k}} F \cdot \nu \, d\mathcal{H}^{n-1}
=
\underbrace{\int_{\partial D\cap {B}_{R_k}}
 F \cdot \nu_{D} \, d\mathcal{H}^{n-1}}_{=:I_1}+\underbrace{\int_{\partial {B}_{R_k}\cap D}
 F\cdot \nu_{{B}_{R_k}} \, d\mathcal{H}^{n-1}}_{=:I_2}.
\]
Now, on the one hand, since $D$ is convex and $\operatorname{prox}_{\lambda f}(x)\in D$, for $\mathcal{H}^{n-1}$-a.e. $y\in \partial D$ one has
$$
\langle y-\operatorname{prox}_{\lambda f}(x),\nu_D(y)\rangle \geq 0 \quad \Rightarrow \quad   F(y)\cdot \nu_D(y) \geq 0.
$$
Hence the $\partial D$-term  $I_1$ is nonnegative.  On the other hand, using \eqref{growth}, the term $I_2$  is bounded in absolute value by 
$$
CR_k^n \exp\left(-\frac{\mu}{2\delta}(R_k-\Vert \operatorname{prox}_{\lambda f}(x) \Vert)^2\right)\to 0 \textrm{ as } k\to \infty,
$$
hence the $\partial {B}_{R_k}$-term $I_2$ vanishes as $k\to \infty$. Therefore,
\begin{equation*}
\begin{aligned}
    0&\leq \limsup_{k\to \infty} \int_{ B_{R_k}\cap \partial D }  F\cdot \nu_D\, d\mathcal{H}^{n-1}\\
    &=\limsup_{k\to \infty}\Big(\int_{\partial D\cap  {B}_{R_k}  }  F\cdot \nu_D\, d\mathcal{H}^{n-1}+\int_{\partial {B}_{R_k}\cap D}  F\cdot \nu_{{B}_{R_k}}\, d\mathcal{H}^{n-1}\Big)\\
    &=\limsup_{k\to \infty}\int_{\partial \Omega_{R_k}} F\cdot \nu\, d\mathcal{H}^{n-1}\\
    &=\limsup_{k\to \infty}\int_{\Omega_{R_k}} \operatorname{div}F(y)dy\\
    &=\int_D \operatorname{div}F(y)dy\\
    &=n\int_D e^{-g(y)/\delta}dy-\frac{1}{\delta}\int_D \langle y-\operatorname{prox}_{\lambda f}(x),\nabla g(y)\rangle e^{-g(y)/\delta}dy.
\end{aligned}
\end{equation*}
Finally, dividing by $\int_{D}e^{-g(y)/\delta}dy$ and rearranging, we obtain the estimate. 
Next, by strong convexity and almost-everywhere differentiability, we obtain
$$
\langle \nabla g(y),y-\operatorname{prox}_{\lambda f}(x)\rangle \geq \mu \Vert y-\operatorname{prox}_{\lambda f}(x)\Vert^2.
$$
Taking expectation yields
$$
\mu \mathbb{E}_{\sigma_{\delta}}\left[\Vert y-\operatorname{prox}_{\lambda f}(x)\Vert^2\right]\leq\mathbb{E}_{\sigma_{\delta}}\left[\langle y-\operatorname{prox}_{\lambda f}(x),\nabla g(y)\rangle \right]=n\delta,
$$
hence $\mathbb{E}_{\sigma_{\delta}}\left[\Vert y-\operatorname{prox}_{\lambda f}(x)\Vert^2\right]\leq n\delta /\mu$. Finally, by Jensen's inequality, 
\begin{equation*}
    \begin{aligned}
  \Vert m_{\delta}(x)-\operatorname{prox}_{\lambda f}(x)\Vert^2&\leq \mathbb{E}_{\sigma_{\delta}}\left[\Vert y-\operatorname{prox}_{\lambda f}(x)\Vert\right]^2\\
  &\leq  \mathbb{E}_{\sigma_{\delta}}\left[\Vert y-\operatorname{prox}_{\lambda f}(x)\Vert^2\right]\\
  &\leq \frac{n\delta}{\mu}.    
    \end{aligned}
\end{equation*}
\end{proof}
\begin{remark}
Let \(Y\sim\sigma_\delta\), where \(\sigma_\delta\) has density proportional to \(\exp(-g/\delta)\).
The proof of Theorem~\ref{convergence} yields the \emph{mean-square localization} estimate
\begin{equation*}
\E\big[\|Y-\operatorname{prox}_{\lambda f}(x)\|^2\big]\le \frac{n\delta}{\mu}.
\end{equation*}
This bound also yields a simple concentration inequality: for every \(r>0\),
\begin{equation*}
\sigma_\delta\!\big(\|Y-\operatorname{prox}_{\lambda f}(x)\|\ge r\big)\le \frac{n\delta}{\mu\,r^2},
\end{equation*}
and therefore, for any \(\eta\in(0,1)\),
\begin{equation*}
\sigma_\delta\!\left(B_{\sqrt{n\delta/(\mu\eta)}}(\operatorname{prox}_{\lambda f}(x))\right)\ge 1-\eta.
\end{equation*}
\end{remark}

The following two examples show that our convergence rates are sharp.
\begin{example}
    Fix $n\geq 1$ and consider
    $$
    f(y)=\sum_{i=1}^n \max\{y_i,0\}.
    $$
    Then $f$ is proper, lower semicontinuous, convex and $\operatorname{dom}f=\mathbb{R}^n$ has nonempty interior. Fix $x=0$ and any $\lambda>0$, and set $\mu=1/\lambda$. Then, $\operatorname{prox}_{\lambda f}(0)=0$ and, by symmetry of $f$, for any $\delta>0$
    $$
    m_{\delta}(0)=(a_{\delta},\ldots,a_{\delta}),
    $$
    where 
    $$
    a_{\delta}:=\frac{\int_{\mathbb{R}}t\,e^{-\phi(t)/\delta}\,dt}{\int_{\mathbb{R}}e^{-\phi(t)/\delta}\,dt} \qquad \phi(t):=\max\{t,0\}+\frac{1}{2\lambda}t^2.
    $$
    Moreover, one can show that 
    $$
    a_{\delta}=-\sqrt{\frac{2}{\pi}\lambda \delta}+O(\delta) \textrm{ as } \delta \to 0^+.
    $$
    Consequently,
    $$
    \Vert m_{\delta}(0)-\operatorname{prox}_{\lambda f}(0)\Vert =\sqrt{n}\vert a_{\delta}\vert=\sqrt{\frac{2n}{\pi \mu}\delta}+O(\delta).
    $$
    This shows that the order of convergence in Theorem~\ref{convergence} is sharp.
\end{example}
The following example shows that the bound obtained in Theorem~\ref{convergence} is optimal.
\begin{example} Fix $n\geq 2$, take $\lambda>0$, set $\mu:=1/\lambda$, and let $x=0$. For $\alpha\in (0,\pi/2)$, define the circular cone
$$
K_{\alpha}:=\{y\in \mathbb{R}^n \colon \langle y,e_1\rangle \geq \Vert y\Vert\, \cos \alpha\},
$$
where $e_1$ denotes the first canonical basis vector. 
Let $f=\iota_{K_{\alpha}}$ be the indicator function of $K_{\alpha}$. Then $\operatorname{prox}_{\lambda f}(0)=0$. Moreover,
$$
\Vert m_{\delta}(0)-\operatorname{prox}_{\lambda f}(0)\Vert =\Vert m_{\delta}(0)\Vert=\sqrt{\frac{\delta}{\mu}} \mathbb{E}[\chi_n] \mathfrak{m}_{\alpha,n},
$$
where $\chi_n$ denotes the $\chi$-distribution with $n$ degrees of freedom and 
$$
\mathfrak{m}_{\alpha,n}:=\mathbb{E}\left[U_1 \mid U_1\geq \cos \alpha\right],
$$
with $U$ uniformly distributed on $\mathbb{S}^{n-1}$ and $U_1=\langle U,e_1\rangle$. In particular, since $U_1\in [\cos \alpha,1]$ on the conditioning event, we have $\mathfrak{m}_{\alpha,n}\in [\cos \alpha,1]$ and hence   $\mathfrak{m}_{\alpha,n}\to 1$ as $\alpha \searrow 0$. Furthermore, 
$$
\frac{\mathbb{E}[\chi_n]}{\sqrt{n}}\to 1 \textrm{ as  } n\to \infty.
$$
Consequently, for every $\varepsilon>0$ there exist $n$ and $\alpha$ such that, for all $\delta>0$, 
$$
\Vert m_{\delta}(0)-\operatorname{prox}_{\lambda f}(0)\Vert \geq (1-\varepsilon)\sqrt{\frac{n\delta}{\mu}}.
$$
This shows that the rate of convergence in Theorem~\ref{convergence} is sharp.
\end{example}
We end this section by showing that $m_{\delta}(x)$ coincides with $\operatorname{prox}_{\lambda f}(x)$ whenever $f$ is quadratic.
\begin{example}
Let $A$ be a symmetric and satisfy $A\succeq -\rho I$ for some $\rho\geq 0$, and define $$f(y)=\frac{1}{2}\langle Ay,y\rangle +\langle b,y\rangle +c.$$
Then for every $x\in \mathbb{R}^n$, every $\delta>0$, and every $0<\lambda <1/\rho$ (with the convention $1/0=+\infty$), one has
$$m_{\delta}(x)=\operatorname{prox}_{\lambda f}(x)=\left(A+\frac{1}{\lambda}I\right)^{-1}\left(\frac{1}{\lambda}x-b\right).
$$
\end{example}

\subsection{Stochastic Approximation of Projections onto Convex Sets}
Let $C \subset \mathbb{R}^n$ be a nonempty closed convex set with nonempty interior. We define
$$
 p_{\delta}(x):=\frac{\mathbb{E}_{y\sim \mathcal{N}(x,\delta  I)}[y\cdot  \mathbf{1}_C(y)]}{\mathbb{E}_{y\sim \mathcal{N}(x,\delta  I)}[\mathbf{1}_C(y)]}=\mathbb{E}_{y\sim \mathcal{N}(x,\delta I)}[y\mid y\in C]. 
$$
As a direct consequence of Theorem~\ref{convergence}, we obtain a quantified convergence result: 
$p_{\delta}(x)\to \operatorname{proj}_{C}(x)$ as $\delta\to 0^{+}$. 
\begin{corollary}\label{cor:convex-set}
Let $C\subset \mathbb{R}^n$ be a nonempty closed convex set with nonempty interior. 
Fix $x\in \mathbb{R}^n$. Then, for every $\delta>0$,
\[
\|p_{\delta}(x)-\operatorname{proj}_C(x)\|\le \sqrt{n\,\delta}.
\]
\end{corollary}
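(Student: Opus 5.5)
The plan is to specialize Theorem~\ref{convergence} to $f=\iota_C$ and $\lambda=1$. The indicator $\iota_C$ of a nonempty closed convex set is proper, lower semicontinuous and convex, so it is $\rho$-weakly convex with $\rho=0$. Its domain is $C$, which has nonempty interior by assumption. With the convention $1/0=+\infty$, every $\lambda>0$ is admissible, and for $\lambda=1$ we get $\mu=1/\lambda-\rho=1$.

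Next we identify the objects. The proximal point is
\[
\operatorname{prox}_{\iota_C}(x)=\operatorname{argmin}_{y\in C}\tfrac12\|x-y\|^2=\operatorname{proj}_C(x).
\]
For the barycenter, since $\exp(-\iota_C(y)/\delta)=\mathbf{1}_C(y)$ (using the convention $\exp(-\infty)=0$), the definition with $\lambda=1$ gives
\[
m_\delta(x)=\frac{\mathbb{E}_{y\sim\mathcal{N}(x,\delta I)}[y\,\mathbf{1}_C(y)]}{\mathbb{E}_{y\sim\mathcal{N}(x,\delta I)}[\mathbf{1}_C(y)]}=p_\delta(x).
\]
This is well defined because $C$ has nonempty interior, so the denominator is positive, and because Gaussian moments are finite.

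Applying Theorem~\ref{convergence} with $\mu=1$ then gives $\|p_\delta(x)-\operatorname{proj}_C(x)\|\le\sqrt{n\delta}$ for every $\delta>0$.

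There is no real obstacle. The only point to check is that the proof of Theorem~\ref{convergence} covers indicator functions, where $g(y)=\iota_C(y)+\tfrac12\|x-y\|^2$ is smooth on $\operatorname{int}C$ and $\nabla g(y)=y-x$ there. The divergence-theorem argument works on $\operatorname{int}(D)$ with $D=C$. The boundary term over $\partial C$ is nonnegative by convexity, since $\operatorname{proj}_C(x)\in C$.
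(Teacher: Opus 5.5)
Your proposal is correct and matches the paper's argument. The paper also obtains the corollary by specializing Theorem~\ref{convergence} to $f=\iota_C$ with $\rho=0$ and $\lambda=1$, so that $\mu=1$, $\operatorname{prox}_{\iota_C}=\operatorname{proj}_C$ and $m_\delta(x)=p_\delta(x)$. Your re-check of the divergence-theorem step is harmless but not needed, since the theorem's hypotheses (proper, lower semicontinuous, convex, with $\operatorname{dom} f$ having nonempty interior) already cover indicator functions.
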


\section{Smooth results}\label{Section4}
In this section, we show that additional smoothness of the problem data yields improved convergence rates compared with those in Section \ref{Section3}. 

\begin{theorem}
Let $f\colon \mathbb{R}^n \to \mathbb{R}$ be $C^2$ and $\rho$-weakly convex with $\rho\geq 0$.  Assume that $\nabla^2 f$ is globally Lipschitz: there exists $L\geq 0$ such that
$$
\Vert \nabla^2 f(u)-\nabla^2 f(v)\Vert \leq L\Vert u-v\Vert \quad \textrm{ for all } u,v\in \mathbb{R}^n.
$$
Fix $x\in \mathbb{R}^n$ and $0<\lambda <1/\rho$, and set $\mu:=\frac{1}{\lambda}-\rho>0$. 
Then, for every $\delta>0$,
$$
\Vert m_{\delta}(x)-\operatorname{prox}_{\lambda f}(x)\Vert \leq \frac{nL}{\mu^2}\,\delta.
$$
\end{theorem}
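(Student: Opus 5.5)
We work with $g(y):=f(y)+\frac{1}{2\lambda}\|x-y\|^2$, which is $C^2$, $\mu$-strongly convex, and has $L$-Lipschitz Hessian. We write $p:=\operatorname{prox}_{\lambda f}(x)$, so that $\nabla g(p)=0$. As in Theorem~\ref{convergence}, $\sigma_\delta$ denotes the probability measure with density proportional to $e^{-g/\delta}$, and $m_\delta(x)=\mathbb{E}_{\sigma_\delta}[y]$. The key identity is the first-order integration by parts $\mathbb{E}_{\sigma_\delta}[\nabla g(y)]=0$. It follows from $\nabla\big(e^{-g/\delta}\big)=-\frac1\delta\nabla g\,e^{-g/\delta}$, applied on balls $B_R$. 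Since $g$ is finite everywhere, there is no $\partial D$ term, and the boundary term on $\partial B_R$ vanishes by the quadratic growth \eqref{growth}, exactly as in the proof of Theorem~\ref{convergence}.

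Next we Taylor expand $\nabla g$ around $p$:
\[
\nabla g(y)=H(y-p)+r(y),\qquad H:=\nabla^2 g(p)\succeq \mu I,\qquad \|r(y)\|\le \tfrac{L}{2}\|y-p\|^2 ,
\]
where the remainder bound uses the Lipschitz Hessian and $\nabla g(p)=0$. Taking expectations and using $\mathbb{E}_{\sigma_\delta}[\nabla g]=0$ gives $H(m_\delta(x)-p)=-\mathbb{E}_{\sigma_\delta}[r(y)]$. Since $\|H^{-1}\|\le 1/\mu$,
\[
\|m_\delta(x)-p\|\le \frac{1}{\mu}\cdot\frac{L}{2}\,\mathbb{E}_{\sigma_\delta}\|y-p\|^2\le \frac{L}{2\mu}\cdot\frac{n\delta}{\mu}=\frac{nL}{2\mu^2}\delta ,
\]
where the second inequality is the mean-square localization bound $\mathbb{E}_{\sigma_\delta}\|y-p\|^2\le n\delta/\mu$ from the proof of Theorem~\ref{convergence} (see the Remark following it). This already gives the claimed bound, with a factor $\tfrac12$ to spare.

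The main obstacle is making the integration by parts rigorous. We need $\nabla g\,e^{-g/\delta}$ to be integrable and the boundary flux to vanish. Because $\nabla^2 g$ is Lipschitz, $\|\nabla g(y)\|$ grows at most quadratically, which the Gaussian-type decay $e^{-\mu\|y-p\|^2/(2\delta)}$ from \eqref{growth} easily dominates. So we select radii $R\to\infty$ and bound the sphere integral by $CR^{n+1}e^{-\mu (R-\|p\|)^2/(2\delta)}\to0$. The same growth control justifies finiteness of $\mathbb{E}_{\sigma_\delta}\|y-p\|^2$. Everything else is routine. Equality in the quadratic case (where $L=0$) recovers the earlier example that $m_\delta(x)=\operatorname{prox}_{\lambda f}(x)$.
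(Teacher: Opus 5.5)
Your proof is correct, and it is sharper than the paper's. Its first three steps coincide with Claims 1--3 of the paper:
\begin{itemize}
\item the identity $\mathbb{E}_{\sigma_\delta}[\nabla g(y)]=0$;
\item the Taylor expansion $\nabla g(y)=H(y-p)+r(y)$ with $\|r(y)\|\le \frac{L}{2}\|y-p\|^2$;
\item the representation $m_\delta(x)-p=-H^{-1}\mathbb{E}_{\sigma_\delta}[r(y)]$.
\end{itemize}

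The only difference is how you control $\mathbb{E}_{\sigma_\delta}\|y-p\|^2$. The paper splits it by the bias--variance identity into $\operatorname{tr}\operatorname{Cov}_{\sigma_\delta}(y)+\|m_\delta(x)-p\|^2$. It bounds the covariance term by the Brascamp--Lieb inequality and the bias term by Theorem~\ref{convergence}, each by $n\delta/\mu$. This gives $2n\delta/\mu$ and hence the constant $nL/\mu^2$.

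You instead invoke the mean-square localization bound from the Remark after Theorem~\ref{convergence}. That bound already controls the \emph{sum} of the paper's two terms by $n\delta/\mu$. So Brascamp--Lieb becomes unnecessary, and you get the constant $nL/(2\mu^2)$, a factor-two improvement. The paper loses this factor by bounding two nonnegative pieces separately by the very quantity that already bounds their sum.

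In this smooth setting your route is also self-contained. Integrate by parts on balls with the field $(y-p)e^{-g(y)/\delta}$; the flux through the spheres vanishes by \eqref{growth}. This gives $\mathbb{E}_{\sigma_\delta}\langle y-p,\nabla g(y)\rangle=n\delta$ exactly. Strong monotonicity of $\nabla g$ together with $\nabla g(p)=0$ then yields $\mathbb{E}_{\sigma_\delta}\|y-p\|^2\le n\delta/\mu$. Thus neither the appendix trace machinery nor an external covariance inequality is needed.

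The paper's route additionally gives the matrix bound $\operatorname{Cov}_{\sigma_\delta}(y)\preceq \frac{\delta}{\mu}I$. That carries directional information, but it is not needed for this statement.
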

\begin{proof} Fix $x\in \mathbb{R}^n$ and assume that $0<\lambda <1/\rho$. Let us consider 
\begin{equation*}
g(y):=f(y)+\frac{1}{2\lambda}\Vert x-y\Vert^2.
\end{equation*}
Since $f$ is $\rho$-weakly convex and $0<\lambda <1/\rho$, it follows that $g$ is $\mu$-strongly convex and that \eqref{growth} holds. In particular, \eqref{growth} implies that $e^{-g/\delta}$ is integrable for every $\delta>0$, so $m_{\delta}(x)$ is well-defined. \\
Given a function $w(\cdot)$, recall that
$$
\mathbb{E}_{\sigma_{\delta}}[w(y)]=\frac{\int_{\mathbb{R}^n} w(y)e^{-g(y)/\delta}dy}{\int_{\mathbb{R}^n}e^{-g(y)/\delta}dy}.
$$
\textbf{Claim 1:} $\mathbb{E}_{\sigma_{\delta}}[ \nabla g(y)]=0$.\\
\emph{Proof of Claim 1:} First, note that the Lipschitz continuity of the Hessian of $f$ implies that $\nabla g$ grows at most quadratically. More precisely, there exist constants $a,b>0$ such that
\[
\|\nabla g(y)\|\le a\|y\|^{2}+b \qquad \text{for all } y\in\mathbb{R}^{n}.
\]
In particular, $\nabla g$ is integrable with respect to the probability measure $\sigma_\delta$.

Second, we observe that
$$
\frac{\partial }{\partial y_i}\left(e^{-g(y)/\delta}\right)=-\frac{1}{\delta}\frac{\partial g}{\partial y_i} (y)e^{-g(y)/\delta}.
$$
Moreover, from \eqref{growth}, it follows that  
$$
0\leq e^{-g(y)/\delta}\leq e^{-g(\operatorname{prox}_{\lambda f}(x))/\delta}\cdot e^{-\frac{\mu}{2\delta}\Vert y-\operatorname{prox}_{\lambda f}(x)\Vert^2} \textrm{ for all } y\in \mathbb{R}^n,
$$
which implies that
$$
0=\int_{\mathbb{R}^n}  \frac{\partial }{\partial y_i}\left(e^{-g(y)/\delta}\right)dy=-\frac{1}{\delta} \int_{\mathbb{R}^n} \frac{\partial g}{\partial y_i} (y)e^{-g(y)/\delta} dy,
$$
which implies the claim.\\
\textbf{Claim 2:} Let $H=\nabla^2 g(\operatorname{prox}_{\lambda f}(x))$. Then, for all $y\in \mathbb{R}^n$
$$
\nabla g(y)=H(y-\operatorname{prox}_{\lambda f}(x))+r(y), 
$$
where $\Vert r(y)\Vert \leq \frac{L}{2}\Vert y-\operatorname{prox}_{\lambda f}(x)\Vert^2$.\\
\emph{Proof of Claim 2:} Since $\nabla g(\operatorname{prox}_{\lambda f}(x))=0$, the formula follows from Taylor's theorem with
$$
r(y)=\int_0^1 (\nabla^2 g(\operatorname{prox}_{\lambda f}(x)+t(y-\operatorname{prox}_{\lambda f}(x)))-H)dt.
$$
Because $\nabla^2 g=\nabla^2 f+\frac{1}{\lambda}I$ and $\nabla^2 f$ is $L$-Lipschitz, we have
$$
\Vert \nabla^2 g(u)-\nabla^2 g(v)\Vert \leq L\Vert u-v\Vert.
$$
Hence
$$
\Vert r(y)\Vert \leq \int_0^1 Lt\Vert y-\operatorname{prox}_{\lambda f}(x)\Vert^2=\frac{L}{2}\Vert y-\operatorname{prox}_{\lambda f}(x)\Vert^2,
$$
which proves the claim. \\
\textbf{Claim 3:} $m_{\delta}(x)-\operatorname{prox}_{\lambda f}(x)=-H^{-1}\mathbb{E}_{\sigma_{\delta}}[r(y)]$.\\
\emph{Proof of Claim 3:} From Claim 1 and 2 we have that
\begin{equation*}
\begin{aligned}
0&=\mathbb{E}_{\sigma_{\delta}}[ \nabla g(y)]=H(m_{\delta}(x)-\operatorname{prox}_{\lambda f}(x))+\mathbb{E}_{\sigma_{\delta}}[r(y)],
\end{aligned}
\end{equation*}
which implies the result.\\
\textbf{Claim 4:} For every $\delta>0$, we have 
$$\Vert m_{\delta}(x)-\operatorname{prox}_{\lambda f}(x)\Vert \leq \frac{nL}{\mu^2}\delta.$$
\emph{Proof of Claim 4:} On the one hand, from Claims 2 and 3,  we have
\begin{equation}\label{cota-inicial}
    \begin{aligned}
\Vert m_{\delta}(x)-\operatorname{prox}_{\lambda f}(x)\Vert &=\Vert H^{-1} \mathbb{E}_{\sigma_{\delta}}[r(y)]\Vert\\
&\leq \frac{1}{\mu}\mathbb{E}_{\sigma_{\delta}}\left[ \Vert r(y)\Vert \right] \\
&\leq \frac{L}{2\mu}\mathbb{E}_{\sigma_{\delta}}[\Vert y-\operatorname{prox}_{\lambda f}(x)\Vert^2],
    \end{aligned}
\end{equation}
where we used that $H\succcurlyeq \mu I$, hence $\Vert H^{-1}\Vert_2 \leq 1/\mu$, where $\Vert \cdot \Vert_2$ is the  induced $\ell_2$-operator norm.  Moreover, using that $\mathbb{E}_{\sigma_{\delta}}[y]=m_{\delta}(x)$, we get that
\begin{equation}\label{paralelogramo}
   \mathbb{E}_{\sigma_{\delta}}[\Vert y-\operatorname{prox}_{\lambda f}(x)\Vert^2]=\mathbb{E}_{\sigma_{\delta}}[\Vert y-m_{\delta}(x)\Vert^2]+\mathbb{E}_{\sigma_{\delta}}[\Vert m_{\delta}(x)-\operatorname{prox}_{\lambda f}(x)\Vert^2].
\end{equation}
On the other hand, we observe that $g/\delta$ is $\mu/\delta$ strongly convex. Hence, $h_{\delta}=e^{-g/\delta}$ is strongly log-concave. From Brascamp–Lieb inequality (see \cite[Theorem~4.1]{MR450480}), we get that {\small 
\begin{equation}\label{BL}
\begin{aligned}
\operatorname{Cov}_{\sigma_{\delta}}(y)\preceq \frac{\delta}{\mu}I \quad \Rightarrow  \quad  \mathbb{E}_{\sigma_{\delta}}[\Vert y-m_{\delta}(x)\Vert^2]=\operatorname{tr}\left(\operatorname{Cov}_{\sigma_{\delta}}(y)\right)\leq \operatorname{tr}\left(\frac{\delta}{\mu}I\right)=\frac{n\delta}{\mu},
\end{aligned}
\end{equation}}
where $\operatorname{Cov}_{\sigma_\delta}$ denotes the covariance matrix of a random vector under the probability measure $\sigma_\delta$. Moreover, Theorem~\ref{convergence} gives us the estimation
\begin{equation*}
\Vert m_{\delta}(x)-\operatorname{prox}_{\lambda f}(x)\Vert^2 \leq \frac{n\delta}{\mu}.
\end{equation*}
Plugging \eqref{BL} and the above inequality  into \eqref{paralelogramo} yields
\begin{equation*}
    \mathbb{E}_{\sigma_{\delta}}[\Vert y-\operatorname{prox}_{\lambda f}(x)\Vert^2]\leq \frac{n\delta}{\mu}+\frac{n\delta}{\mu}=\frac{2n\delta}{\mu}.
\end{equation*}
Finally, from the above inequality and \eqref{cota-inicial}, we get
$$
\Vert m_{\delta}(x)-\operatorname{prox}_{\lambda f}(x)\Vert \leq \frac{L}{2\mu}\cdot \frac{2n\delta}{\mu}=\frac{nL}{\mu^2}\delta,
$$
which proves Claim  4 and the theorem.
\end{proof}
\subsection{Stochastic Approximation of Projections onto Smooth Sets}
 Let $C \subset \mathbb{R}^n$ be a nonempty closed set with nonempty interior. Recall that
$$
 p_{\delta}(x):=\frac{\mathbb{E}_{y\sim \mathcal{N}(x,\delta  I)}[y\cdot  \mathbf{1}_C(y)]}{\mathbb{E}_{y\sim \mathcal{N}(x,\delta  I)}[\mathbf{1}_C(y)]}=\mathbb{E}_{y\sim \mathcal{N}(x,\delta I)}[y\mid y\in C]. 
$$
\begin{definition}
Let $C\subset \mathbb{R}^n$ be a closed convex set and let $p\in \partial C$. We say that $\partial C$
admits a \emph{local $C^{2,1}$ chart} at $p$ with constants $(\rho,L,M)$ (see Figure~\ref{fig:convexo}) if, after a rigid motion
sending $p$ to $0$ and  the outward unit normal at $p$ to $e_n$, there exists $\rho>0$ and a concave function
$h\in C^{2,1}(\mathbb{B}^{n-1}_{\rho})$ such that $h(0)=0$, $\nabla h(0)=0$, $\Vert \nabla^2 h(z)\Vert \leq L$, 
\[
C\cap \big(\mathbb{B}^{n-1}_{\rho}\times (-\rho,\rho)\big)
=\{(z,t)\in \mathbb{R}^{n-1}\times \mathbb{R}:\ t\le h(z)\},
\]
and $\nabla^2 h$ is Lipschitz on $\mathbb{B}^{n-1}_{\rho}$ with constant $M\ge 0$, i.e.,
\[
\|\nabla^2 h(z)-\nabla^2 h(z')\|\le M\|z-z'\|\qquad \text{for all } z,z'\in \mathbb{B}^{n-1}_{\rho}.
\]
\begin{figure}[h]
  \centering
    \includegraphics{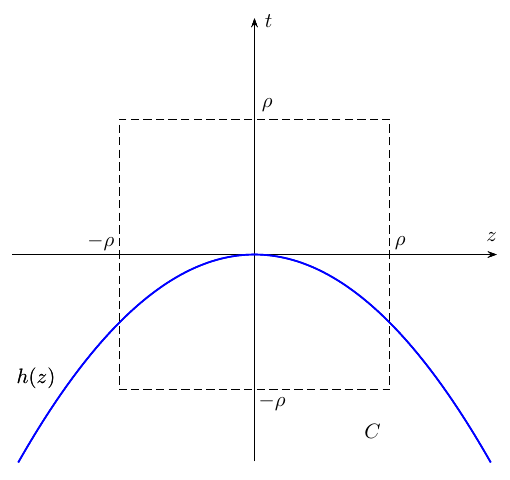}
\caption{Local $C^{2,1}$ chart of $\partial C$.}
      \label{fig:convexo}
\end{figure} 
\end{definition}
In the case of a smooth convex set, we obtain the following refinement of Corollary~\ref{cor:convex-set}, which yields a better convergence rate of order $\delta$. 

\begin{theorem}\label{prop-C21}
Let $C \subset \mathbb{R}^n$ be a closed convex set with nonempty interior, and fix
$x\in \mathbb{R}^n\setminus C$. Assume that $\partial C$ admits a local $C^{2,1}$ chart at
$\operatorname{proj}_C(x)$ with constants $(\rho,L,M)$. Then there exists $\delta_0>0$ such that for every $0<\delta<\delta_0$, we have
\begin{equation*}
\begin{aligned}
\|p_{\delta}(x)-\operatorname{proj}_C(x)\|
= &\mathcal{O}(\delta). 
\end{aligned}
\end{equation*}
\end{theorem}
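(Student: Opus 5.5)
Set $p:=\operatorname{proj}_C(x)$, $d:=\|x-p\|>0$, and work in the chart coordinates: $p=0$, the outward normal is $e_n$, and $x=d\,e_n$. The density of $y\sim\mathcal N(x,\delta I)$ restricted to $C$ is proportional to
\[
\exp\bigl(-\tfrac{\|y\|^2}{2\delta}+\tfrac{d\,y_n}{\delta}\bigr)\mathbf 1_C(y).
\]
We rescale $y=(\delta z',\delta s)$ in the normal direction and $y=(\sqrt\delta\, z,\cdot)$ tangentially. This is the natural anisotropic scaling: the factor $e^{d y_n/\delta}$ confines $y_n$ to a layer of width $O(\delta)$ below $0$, while the Gaussian confines the tangential variable $z$ to scale $\sqrt\delta$. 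The goal is to show that the tangential mean is $O(\delta)$ (which requires a symmetry cancellation), and that the normal mean is automatically $O(\delta)$.

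\textbf{Step 1: localization.} By Corollary~\ref{cor:convex-set} and its mean-square version (the remark after Theorem~\ref{convergence}), $\sigma_\delta$ concentrates near $0$. For a sharper exponentially small tail, use the convexity of $C$: $C\subset\{y_n\le 0\}$, so on $C$ the density is at most $\exp(-\|y'\|^2/2\delta-(d|y_n|+y_n^2/2)/\delta)$. Compare this with the normalizing constant, which is bounded below by $c\,\delta^{(n-1)/2}\delta$ by integrating over the set $\{\|y'\|\le\sqrt\delta,\ h(y')-\delta\le y_n\le h(y')\}$. Using $|h|\le L\|y'\|^2/2$, this shows that the contribution of $C\setminus(\mathbb B^{n-1}_{\rho}\times(-\rho,\rho))$ to both the numerator and the denominator is $O(e^{-c/\delta})$. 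So up to exponentially small errors we may replace $C$ by the chart region $\{t\le h(z)\}$, $\|z\|<\rho$, $|t|<\rho$.

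\textbf{Step 2: integrate out the normal variable.} For fixed $y'$, integrate $t$ over $(-\rho,h(y')]$. Set
\[
\phi_\delta(t):=e^{(dt-t^2/2)/\delta}.
\]
Then
\[
\int_{-\rho}^{h}\phi_\delta=\tfrac{\delta}{d}\,e^{dh/\delta}\bigl(1+O(\delta)+O(|h|)\bigr)
\quad\text{and}\quad
\int t\,\phi_\delta=\bigl(h-\tfrac{\delta}{d}\bigr)\int\phi_\delta\,(1+\dots),
\]
with errors that are explicit via Taylor expansion of $e^{-t^2/2\delta}$ on the $O(\delta)$ layer. Since $|h(y')|\le L\|y'\|^2/2=O(\delta)$ on the effective region, the normal component of $p_\delta$ is $-\delta/d+O(\delta)$, hence $O(\delta)$. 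The tangential component is reduced to
\[
\frac{\int y'\,w_\delta(y')\,dy'}{\int w_\delta(y')\,dy'},\qquad
w_\delta(y'):=e^{-\|y'\|^2/2\delta}\,e^{d\,h(y')/\delta}\,\bigl(1+O(\delta)+O(\|y'\|^2)\bigr).
\]

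\textbf{Step 3: tangential cancellation (the main obstacle).} Substitute $y'=\sqrt\delta\,z$. Using $h(y')=\tfrac12\langle\nabla^2h(0)y',y'\rangle+R(y')$ with $|R|\le M\|y'\|^3/6$ (this is where the $C^{2,1}$ hypothesis enters), we get
\[
d\,h/\delta=\tfrac d2\langle\nabla^2h(0)z,z\rangle+O(M\sqrt\delta\|z\|^3).
\]
The leading weight is the centered Gaussian $\exp\bigl(-\tfrac12\langle(I-d\nabla^2h(0))z,z\rangle\bigr)$. Concavity of $h$ gives $I-d\nabla^2h(0)\succeq I$, so this Gaussian is nondegenerate and even in $z$, and its first moment vanishes. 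The tangential mean is therefore $\sqrt\delta\cdot\bigl(0+O(\sqrt\delta)\bigr)=O(\delta)$, because the first-order correction $O(\sqrt\delta\|z\|^3)$ integrated against $z$ contributes only $O(\sqrt\delta)$ in $z$-units. Care is needed so that all corrections, namely $e^{O(\sqrt\delta\|z\|^3)}-1$, the $O(\delta\|z\|^2)$ terms from Step 2, and the truncation at $\|z\|<\rho/\sqrt\delta$, are uniformly controlled. This is done by dominated bounds of the form $|e^{a}-1|\le|a|e^{|a|}$, with $|a|\le M\rho\|z\|^2/6$. By shrinking $\rho$ we may assume $dM\rho/6<1/4$, so that the dominating weight $e^{-\|z\|^2/4}$ remains integrable. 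Choosing $\delta_0$ small enough for the asymptotics of Steps 1–2 to hold then yields $\|p_\delta(x)-\operatorname{proj}_C(x)\|=O(\delta)$, with a constant depending on $n,d,\rho,L,M$.
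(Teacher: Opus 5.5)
Your proposal is correct and follows essentially the same route as the paper. Both arguments localize to the chart with exponentially small tail contributions, integrate out the normal variable so that the normal mean is $\mathbb{E}[h]-\delta/d$ up to higher-order terms, and kill the tangential mean by symmetry, with the odd part of the weight coming only from the cubic $C^{2,1}$ remainder of $h$.

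The one technical difference is in that last step. The paper symmetrizes the fiber integral and applies the mean value theorem, using $h\le 0$ to get $|J(z)-J(-z)|\le e^{-d^2/(2\delta)}|h(z)-h(-z)|\le \tfrac{M}{3}\|z\|^3 e^{-d^2/(2\delta)}$. This avoids your expansion of $e^{dh/\delta}$ around the even Gaussian $\exp\bigl(-\tfrac12\langle (I-d\nabla^2h(0))z,z\rangle\bigr)$ and the shrinking of $\rho$ needed for the domination $|e^a-1|\le |a|e^{|a|}$. In that domination step the bound should read $|a|\le dM\rho\|z\|^2/6$, with the factor $d$ included.
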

\begin{proof}
The proof can be found in Section \ref{proof43}.
\end{proof} 
\begin{remark} A careful review of the proof of Theorem~\ref{prop-C21} shows that, for this result, it suffices that the set $C$ be locally a convex body in a neighborhood of the (necessarily unique) projection point.
\end{remark}

The next example shows that the order of the bound obtained in Theorem \ref{prop-C21} is sharp.
\begin{example} Consider the half-space $C=\{y\in \mathbb{R}^n: \langle y, \nu\rangle \leq 0\}$, where $\Vert \nu\Vert=1$. Then $\partial C=\{y\in\mathbb{R}^n: \langle y,\nu\rangle=0\}$ is a hyperplane. Pick $x\notin C$, so that $\langle x,\nu\rangle>0$. Then
$$
\operatorname{proj}_C(x)=x-\langle x,\nu \rangle \nu, \quad d_C(x)=\langle x,\nu\rangle.
$$
Moreover, since $p_{\delta}(x)=\mathbb{E}_{y\sim \mathcal{N}(x,\delta I)}\!\left[\,y \mid y\in C\,\right]$, symmetry implies that the conditional mean in tangential directions is unchanged and the drift is purely normal. Hence, 
$$
p_{\delta}(x)-\operatorname{proj}_C(x)=\alpha_{\delta}\,\nu
$$
for some scalar $\alpha_{\delta}<0$. In fact, one can show that
$$
\alpha_{\delta}:=d_C(x)-\sqrt{\delta}\frac{\phi(d_C(x)/\sqrt{\delta})}{\Phi(-d_C(x)/\sqrt{\delta})},
$$
where $\phi$ and $\Phi$ denote the standard normal density and distribution function, respectively.
Therefore
$$
p_{\delta}(x)=\operatorname{proj}_C(x)+\left(d_C(x)-\sqrt{\delta}\frac{\phi(d_C(x)/\sqrt{\delta})}{\Phi(-d_C(x)/\sqrt{\delta})}\right)\nu,
$$
which implies that $\|p_{\delta}(x)-\operatorname{proj}_C(x)\|=O(\delta)$  as $\delta \to 0^{+}$. Hence, the  order of the bound obtained in Theorem \ref{prop-C21} is sharp.  
\end{example}

\paragraph{Acknowledgements}

D. Morales was supported by ANID Chile under the CMM BASAL funds for the Center of Excellence FB210005 and ANID BECAS/DOCTORADO NACIONAL 21252113. P. P\'erez-Aros was supported by ANID (Chile) through Fondecyt Regular grants No.~1220886, No.~1240120, and No.~1261728; CMM BASAL funds for the Center of Excellence FB210005; and the projects ECOS230027 and MATH-AmSud 23-MATH-17. E. Vilches was supported by ANID (Chile) through Fondecyt Regular grants No.~1220886, No.~1240120, and No.~1261728; CMM BASAL funds for the Center of Excellence FB210005; and the projects ECOS230027 and MATH-AmSud 23-MATH-17.

\bibliographystyle{abbrv}
\bibliography{bib.bib}

@article {MR4581306,
    AUTHOR = {Osher, Stanley and Heaton, Howard and Fung, Samy Wu},
     TITLE = {A {H}amilton-{J}acobi-based proximal operator},
   JOURNAL = {Proc. Natl. Acad. Sci. USA},
  FJOURNAL = {Proceedings of the National Academy of Sciences of the United
              States of America},
    VOLUME = {120},
      YEAR = {2023},
    NUMBER = {14},
     PAGES = {},
      ISSN = {0027-8424,1091-6490},
   MRCLASS = {90C26 (49L25)},
  MRNUMBER = {4581306},
       DOI = {10.1073/pnas.2220469120},
       URL = {https://doi.org/10.1073/pnas.2220469120},
}

@article {MR4923371,
    AUTHOR = {Tibshirani, Ryan J. and Fung, Samy Wu and Heaton, Howard and
              Osher, Stanley},
     TITLE = {Laplace meets {M}oreau: smooth approximation to infimal
              convolutions using {L}aplace's method},
   JOURNAL = {J. Mach. Learn. Res.},
  FJOURNAL = {Journal of Machine Learning Research (JMLR)},
    VOLUME = {26},
      YEAR = {2025},
     PAGES = {},
      ISSN = {1532-4435,1533-7928},
   MRCLASS = {49J53 (35D40 35F21 49L12 90C25)},
  MRNUMBER = {4923371},
}

@article {MR450480,
    AUTHOR = {Brascamp, Herm Jan and Lieb, Elliott H.},
     TITLE = {On extensions of the {B}runn-{M}inkowski and
              {P}r\'{e}kopa-{L}eindler theorems, including inequalities for
              log concave functions, and with an application to the
              diffusion equation},
   JOURNAL = {J. Functional Analysis},
  FJOURNAL = {Journal of Functional Analysis},
    VOLUME = {22},
      YEAR = {1976},
    NUMBER = {4},
     PAGES = {366--389},
      ISSN = {0022-1236},
   MRCLASS = {26A87 (26A51)},
  MRNUMBER = {450480},
MRREVIEWER = {L.\ Leindler},
       DOI = {10.1016/0022-1236(76)90004-5},
       URL = {https://doi.org/10.1016/0022-1236(76)90004-5},
}

@article {MR5558,
    AUTHOR = {Gordon, Robert D.},
     TITLE = {Values of {M}ills' ratio of area to bounding ordinate and of
              the normal probability integral for large values of the
              argument},
   JOURNAL = {Ann. Math. Statistics},
  FJOURNAL = {Annals of Mathematical Statistics},
    VOLUME = {12},
      YEAR = {1941},
     PAGES = {364--366},
      ISSN = {0003-4851},
   MRCLASS = {62.0X},
  MRNUMBER = {5558},
MRREVIEWER = {Z.\ W.\ Birnbaum},
       DOI = {10.1214/aoms/1177731721},
       URL = {https://doi.org/10.1214/aoms/1177731721},
}

@book {MR3616647,
    AUTHOR = {Bauschke, H.H. and Combettes, P.L.},
     TITLE = {Convex analysis and monotone operator theory in {H}ilbert
              spaces},
    SERIES = {{CMS} Books Math./Ouvrages Math. {SMC}},
   EDITION = {2nd},
  publisher = {Springer},
  address   = {Cham},
      YEAR = {2017},
     PAGES = {xix+619},
      ISBN = {978-3-319-48310-8; 978-3-319-48311-5},
       DOI = {10.1007/978-3-319-48311-5},
       URL = {https://doi.org/10.1007/978-3-319-48311-5}
}

@book {MR3288271,
    AUTHOR = {Attouch, Hedy and Buttazzo, Giuseppe and Michaille,
              G\'{e}rard},
     TITLE = {Variational analysis in {S}obolev and {BV} spaces},
    SERIES = {MOS-SIAM Series on Optimization},
    VOLUME = {17},
   EDITION = {2nd},
PUBLISHER = {SIAM},
ADDRESS   = {Philadelphia, PA},
      YEAR = {2014},
     PAGES = {xii+793},
      ISBN = {978-1-611973-47-1},
   MRCLASS = {49-02 (46E35 49J45 49J53 49K20 74G65)},
  MRNUMBER = {3288271},
MRREVIEWER = {Luca\ Granieri},
       DOI = {10.1137/1.9781611973488},
       URL = {https://doi.org/10.1137/1.9781611973488},
}

@book {MR3409135,
    AUTHOR = {Evans, Lawrence C. and Gariepy, Ronald F.},
     TITLE = {Measure theory and fine properties of functions},
    SERIES = {Textb. Math.},
   EDITION = {Revised},
 publisher = {CRC Press},
  address   = {Boca Raton, FL},
      YEAR = {2015},
     PAGES = {xiv+299},
      ISBN = {978-1-4822-4238-6},
   MRCLASS = {28-01},
  MRNUMBER = {3409135},
}

@book {MR257325,
    AUTHOR = {Federer, Herbert},
     TITLE = {Geometric measure theory},
    SERIES = {Grundlehren Math. Wiss.},
    VOLUME = {Band 153},
  PUBLISHER = {Springer-Verlag},
  ADDRESS   = {New York},
      YEAR = {1969},
     PAGES = {xiv+676},
   MRCLASS = {28.80 (26.00)},
  MRNUMBER = {257325},
MRREVIEWER = {J.\ E.\ Brothers},
}

@book {MR775683,
    AUTHOR = {Grisvard, P.},
     TITLE = {Elliptic problems in nonsmooth domains},
    SERIES = {Monographs and Studies in Mathematics},
    VOLUME = {24},
  PUBLISHER = {Pitman},
  ADDRESS   = {Boston, MA},
      YEAR = {1985},
     PAGES = {xiv+410},
      ISBN = {0-273-08647-2},
   MRCLASS = {35J25 (35-02)},
  MRNUMBER = {775683},
MRREVIEWER = {P.\ Szeptycki},
}

@book {MR3726909,
    AUTHOR = {Leoni, Giovanni},
     TITLE = {A first course in {S}obolev spaces},
    SERIES = {Grad. Stud. Math.},
    VOLUME = {181},
   EDITION = {2nd},
 publisher = {American Mathematical Society},
  address   = {Providence, RI},
      YEAR = {2017},
     PAGES = {xxii+734},
      ISBN = {978-1-4704-2921-8},
   MRCLASS = {46E35 (26Axx 26B30 28A78 46-01)},
  MRNUMBER = {3726909},
       DOI = {10.1090/gsm/181},
       URL = {https://doi.org/10.1090/gsm/181},
}

@article {MR201952,
    AUTHOR = {Moreau, Jean-Jacques},
     TITLE = {Proximit\'{e} et dualit\'{e} dans un espace hilbertien},
   JOURNAL = {Bull. Soc. Math. France},
  FJOURNAL = {Bulletin de la Soci\'{e}t\'{e} Math\'{e}matique de France},
    VOLUME = {93},
      YEAR = {1965},
     PAGES = {273--299},
      ISSN = {0037-9484},
   MRCLASS = {46.15 (41.60)},
  MRNUMBER = {201952},
MRREVIEWER = {I.\ G.\ Amemiya},
       URL = {http://www.numdam.org/item?id=BSMF_1965__93__273_0},
}


\appendix
\section{Auxiliary lemmas}

\begin{lemma}\label{co-area}
Let $D\subset \mathbb{R}^n$ be a closed convex set with nonempty interior, and for $R>0$ set $\Omega_R:=\operatorname{int}(D)\cap B_R$. Then $\Omega_R$ is a bounded Lipschitz domain. Moreover, for a.e. $R>0$,  
\begin{equation}\label{eq:corner-null}
\mathcal{H}^{n-1}\big(\partial D\cap \partial B_R\big)=0.    
\end{equation}
Let $F\in W^{1,1}_{\operatorname{loc}}(\operatorname{int}(D);\mathbb{R}^n)$. Then, for every $R>0$ such that \eqref{eq:corner-null} holds and $\int_{\Omega_{R}}(\vert F(x)\vert +\Vert \nabla F(x)\Vert)\,dx<\infty$, we have
\begin{equation*}
\int_{\Omega_R}\operatorname{div}F(x)\,dx
=
\int_{\partial D\cap B_R}F\cdot \nu_D\,d\mathcal{H}^{n-1}
+
\int_{\operatorname{int}(D)\cap \partial B_R} F\cdot \nu_{B_R}\,d\mathcal{H}^{n-1},
\end{equation*}
where $\nu_D$ is the outer unit normal to $D$, defined $\mathcal{H}^{n-1}$-a.e. on $\partial D$,
$\nu_{B_R}$ is the outer unit normal to $B_R$. Here, $F$ on $\partial\Omega_R$ denotes the \emph{interior} Sobolev trace of $F|_{\Omega_R}$ on $\partial\Omega_R$,
restricted to $\partial D\cap B_R$ and to $\operatorname{int}(D)\cap \partial B_R$.
\end{lemma}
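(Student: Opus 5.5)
The plan is to prove the three assertions of Lemma~\ref{co-area} in order: the Lipschitz regularity of $\Omega_R$, the null-corner property \eqref{eq:corner-null}, and then the boundary decomposition obtained from the classical Gauss--Green formula for Sobolev fields.

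\emph{Step 1: $\Omega_R$ is a bounded Lipschitz domain.} The set $\Omega_R=\operatorname{int}(D)\cap B_R$ is an intersection of two open convex sets, so it is open, convex and bounded. If it is empty, both sides of the identity vanish and nothing is to be proved, so assume $\Omega_R\neq\emptyset$. A bounded open convex set with nonempty interior is a Lipschitz domain (Grisvard \cite{MR775683}). The argument is the standard one. Pick $a\in\Omega_R$ and a ball $B_r(a)\subset\Omega_R$. The boundary is then the radial graph $\{a+\varrho(\theta)\theta:\theta\in\mathbb{S}^{n-1}\}$ of the gauge-type function $\varrho$, which is Lipschitz because $B_r(a)$ is contained in the set. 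Locally this radial graph becomes a Lipschitz graph over a hyperplane.

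\emph{Step 2: \eqref{eq:corner-null} for a.e.\ $R$.} The boundary $\partial D$ of a convex body is locally the graph of a Lipschitz (indeed convex) function. Hence $\mathcal{H}^{n-1}\llcorner\partial D$ is a locally finite, in particular $\sigma$-finite, measure. The sets $\partial D\cap\partial B_R$, $R>0$, are pairwise disjoint. Fix $R_0$; since $\mathcal{H}^{n-1}(\partial D\cap B_{R_0})<\infty$, only countably many $R<R_0$ can satisfy $\mathcal{H}^{n-1}(\partial D\cap\partial B_R)>0$. Letting $R_0\to\infty$ shows the exceptional set of radii is countable, hence Lebesgue-null. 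Alternatively, one can apply the coarea formula on the rectifiable set $\partial D$ with the Lipschitz map $y\mapsto\|y\|$, but the disjointness argument suffices.

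\emph{Step 3: Gauss--Green and splitting of the boundary.} By hypothesis $F\in W^{1,1}(\Omega_R;\mathbb{R}^n)$, and by Step~1 the domain $\Omega_R$ is bounded and Lipschitz. The trace theorem and the Gauss--Green formula for $W^{1,1}$ fields on Lipschitz domains (Evans--Gariepy \cite{MR3409135}, Leoni \cite{MR3726909}) then give
\[
\int_{\Omega_R}\operatorname{div}F\,dx=\int_{\partial\Omega_R}F\cdot\nu_{\Omega_R}\,d\mathcal{H}^{n-1},
\]
where $F$ on $\partial\Omega_R$ is the interior trace and $\nu_{\Omega_R}$ is the outer normal, defined $\mathcal{H}^{n-1}$-a.e. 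Next write
\[
\partial\Omega_R\subset(\partial D\cap B_R)\cup(\operatorname{int}(D)\cap\partial B_R)\cup(\partial D\cap\partial B_R).
\]
For $R$ as in \eqref{eq:corner-null}, the last piece is $\mathcal{H}^{n-1}$-null, so the boundary integral splits into the two claimed terms.

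It remains to identify the normals on the two pieces. Near a point of $\partial D\cap B_R$, the set $\Omega_R$ coincides with $\operatorname{int}(D)$ in a small ball, so $\nu_{\Omega_R}=\nu_D$ wherever the latter exists, which is $\mathcal{H}^{n-1}$-a.e.\ by Rademacher applied to the local convex graph. Near a point of $\operatorname{int}(D)\cap\partial B_R$, the set $\Omega_R$ coincides locally with $B_R$, so $\nu_{\Omega_R}=\nu_{B_R}$. The interior trace is a local object, so restricting it to each piece is consistent.

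I expect the main (if mild) obstacle to be this bookkeeping in Step~3. One must check that $\partial D\cap B_R$ is indeed contained in $\partial\Omega_R$, using that $D$ is the closure of its interior, and that the local coincidence of $\Omega_R$ with $D$ or with $B_R$ justifies transferring both the normals and the traces. Everything else reduces to quoted standard results.
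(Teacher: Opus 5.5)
Your proposal is correct. Steps 1 and 3 coincide with the paper's argument:
\begin{itemize}
\item Grisvard gives the Lipschitz regularity of the convex domain $\Omega_R$.
\item The Gauss--Green formula for $W^{1,1}$ fields on a bounded Lipschitz domain gives the boundary integral.
\item $\partial\Omega_R$ is split into three pieces, the corner is discarded via \eqref{eq:corner-null}, and the normals are identified because $\Omega_R$ coincides locally with $\operatorname{int}(D)$ or with $B_R$.
\end{itemize}

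The genuine difference is in Step 2.

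The paper's route is the following. It uses the countable $(n-1)$-rectifiability of $\partial D$ and covers $\partial D$, up to a null set, by Lipschitz images $\Phi_k(U_k)$. It applies the coarea formula to $g_k(u)=\|\Phi_k(u)\|$ to get $\mathcal{L}^{n-1}(g_k^{-1}(R))=0$ for a.e.\ $R$. It then transfers this to $\partial D\cap\partial B_R$ via the area estimate $\mathcal{H}^{n-1}(\Phi_k(E))\le\operatorname{Lip}(\Phi_k)^{n-1}\mathcal{L}^{n-1}(E)$.

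You instead note that the closed sets $\partial D\cap\partial B_R$ are pairwise disjoint. They also lie inside a set on which $\mathcal{H}^{n-1}$ is locally finite, so only countably many of them can carry positive measure.

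What each route buys:
\begin{itemize}
\item Your argument is more elementary. It avoids the coarea formula and uses only the local Lipschitz-graph structure of $\partial D$, which you need anyway to define $\nu_D$.
\item It gives the stronger conclusion that the exceptional radii form a countable set, not merely a null set.
\item The paper's route yields extra information about the slices, e.g.\ $(n-2)$-rectifiability of $g_k^{-1}(R)$, which the lemma never uses.
\end{itemize}

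Your disjointness argument needs only $\sigma$-finiteness of $\mathcal{H}^{n-1}\llcorner\partial D$. This already follows from the countable rectifiability the paper invokes, so both routes rest on the same structural input. You just extract the conclusion with a simpler counting argument.
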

\begin{proof}
The first assertion follows from \cite[Chapter~1]{MR775683}, since every convex domain has a Lipschitz boundary. To prove \eqref{eq:corner-null}, we observe that, since $D$ is convex, its boundary $\partial D$ is countably $(n-1)$-rectifiable. Hence, there exist countably many Lipschitz maps $\Phi_k\colon U_k \subset \mathbb{R}^{n-1}\to \mathbb{R}^n$ such that
$$
\mathcal{H}^{n-1}\left(\partial D \setminus \bigcup_{k=1}^\infty \Phi_k(U_k) \right)=0.
$$
see, e.g., \cite{MR257325}. Fix $k$. Define the Lipschitz function $g_k\colon U_k \to \mathbb{R}$ by $g_k(u):=\Vert \Phi_k(u)\Vert$. By the coarea formula for Lipschitz functions (see \cite[Chapter~1]{MR3409135}), for a.e. $R>0$ the level set $g_k^{-1}(R)$ is $(n-2)$-rectifiable and has locally finite $\mathcal{H}^{n-2}$-measure. In particular, $\mathcal{L}^{n-1}(g_k^{-1}(R))=0$. Then, using the area estimate for Lipschitz maps (a consequence of the area formula),
$$
\mathcal{H}^{n-1}(\Phi_k(g_k^{-1}(R)))\leq \operatorname{Lip}(\Phi_k)^{n-1} \mathcal{L}^{n-1}(g_k^{-1}(R))=0.
$$
Moreover, $\partial D\cap  \Phi_k(g_k^{-1}(R))\subset \partial D\cap \partial B_R$ and $\partial D\cap \partial {B}_{R'} \cap \Phi_k(g_k^{-1}(R)) = \emptyset$ for all $R' \neq R$. Hence,  $$(\partial D\cap \partial B_R)\cap \Phi_k(U_k) = (\partial D\cap \partial B_R)\cap \Phi_k(g^{-1}_k(R)), $$ which justifies that 
$$
\mathcal{H}^{n-1}\big((\partial D\cap \partial B_R)\cap \Phi_k(U_k)\big)=0.
$$
Taking the countable union over $k$ and using the $\mathcal{H}^{n-1}$-negligibility of the uncovered part of $\partial D$, we obtain \eqref{eq:corner-null}. To prove the last assertion, since $\Omega_R\subset \operatorname{int}(D)$ is open and bounded, the assumptions on $F$ imply that $F\in W^{1,1}(\Omega_R;\mathbb{R}^n)$ and $\operatorname{div}F\in L^1(\Omega_R)$. Since $\Omega_R$ is a bounded Lipschitz domain, the (interior) trace operator $\operatorname{Tr}\colon W^{1,1}(\Omega_R)\to L^1(\partial \Omega_R)$ is well-defined, and the Gauss-Green formula for Sobolev vector fields (see, e.g., \cite[Chapter~6]{MR3726909}) yields
\begin{equation}\label{Gauss-Green}
\int_{\Omega_R}\operatorname{div}F\,dx=\int_{\partial \Omega_R}\operatorname{Tr}(F)\cdot \nu_{\Omega_R}\, d\mathcal{H}^{n-1}.    
\end{equation}
Next, we observe that
$$
\partial \Omega_R=(\partial D\cap B_R)\cup (\operatorname{int}(D)\cap \partial B_R)\cup (\partial D \cap \partial B_R).
$$
Moreover, on $\partial D\cap B_R$ the boundary $\partial \Omega_R$ coincides locally with $\partial D$.  Hence $\nu_{\Omega_R}=\nu_D$ $\mathcal{H}^{n-1}$-a.e. on $\partial D\cap B_R$. Finally, by \eqref{eq:corner-null}, the set $\partial D\cap \partial B_R$ has $\mathcal{H}^{n-1}$-measure zero and does not contribute to the boundary integral in \eqref{Gauss-Green}. Therefore \eqref{Gauss-Green} splits into
$$
\int_{\Omega_R}\operatorname{div}F\,dx=\int_{\partial D\cap B_R}\operatorname{Tr}(F)\cdot \nu_D\, d\mathcal{H}^{n-1}+\int_{\operatorname{int}(D)\cap \partial B_R}\operatorname{Tr}(F)\cdot \nu_{B_R}\,d\mathcal{H}^{n-1},
$$
as claimed.
\end{proof}

The following lemma can be proved directly using polar coordinates and integration by parts.
\begin{lemma}\label{Lemma-integral2}
Let $n\ge 1$, $R>0$, $d>0$, and $k\in \{0,1\}$. Then, for every $0<\delta\le \frac{R^2}{2n}$, one has
\[
\int_{\{y\in\R^n:\ \|y\|\ge R\}} (\Vert y\Vert +d)^k\exp\!\Big(-\frac{\|y\|^2}{2\delta}\Big)\,dy
\le
\frac{4\pi^{n/2}}{\Gamma\!\left(\frac n2\right)} (R+d)^k R^{n-2}\delta\,
\exp\!\Big(-\frac{R^2}{2\delta}\Big).
\]
\end{lemma}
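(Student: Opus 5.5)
We prove Lemma~\ref{Lemma-integral2} by passing to polar coordinates and bounding the resulting one-dimensional Gaussian tail integral by integration by parts. Writing $|\mathbb{S}^{n-1}|=2\pi^{n/2}/\Gamma(n/2)$, the left-hand side equals
\[
\frac{2\pi^{n/2}}{\Gamma(n/2)}\int_R^\infty (r+d)^k r^{n-1} e^{-r^2/(2\delta)}\,dr .
\]
It therefore suffices to show
\[
J_k:=\int_R^\infty (r+d)^k r^{n-1}e^{-r^2/(2\delta)}\,dr\le 2(R+d)^k R^{n-2}\delta\, e^{-R^2/(2\delta)}
\quad\text{for } \delta\le R^2/(2n).
\]

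\textbf{Case $k=0$.} Write $r^{n-1}e^{-r^2/(2\delta)}=r^{n-2}\cdot r e^{-r^2/(2\delta)}$ and integrate by parts using $\frac{d}{dr}\bigl(-\delta e^{-r^2/(2\delta)}\bigr)=re^{-r^2/(2\delta)}$. This gives
\[
J_0=\delta R^{n-2}e^{-R^2/(2\delta)}+(n-2)\delta\int_R^\infty r^{n-3}e^{-r^2/(2\delta)}\,dr .
\]
For $n\le 2$ the last term is $\le 0$ (for $n=1$ it is negative, and for $n=2$ it vanishes), so $J_0\le \delta R^{n-2}e^{-R^2/(2\delta)}$. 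For $n\ge3$, on $r\ge R$ we have $r^{n-3}\le r^{n-1}/R^2$, hence the last term is at most $\frac{(n-2)\delta}{R^2}J_0\le \frac{n\delta}{R^2}J_0\le \tfrac12 J_0$ by the hypothesis on $\delta$. Absorbing this into the left-hand side yields $J_0\le 2\delta R^{n-2}e^{-R^2/(2\delta)}$.

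\textbf{Case $k=1$.} Write $(r+d)r^{n-1}=r^{n}+dr^{n-1}$. The second piece is $dJ_0$, which is already controlled. For the first piece, the same integration by parts gives
\[
\int_R^\infty r^n e^{-r^2/(2\delta)}\,dr=\delta R^{n-1}e^{-R^2/(2\delta)}+(n-1)\delta\int_R^\infty r^{n-2}e^{-r^2/(2\delta)}\,dr .
\]
Using $r^{n-2}\le r^n/R^2$ and $(n-1)\delta/R^2\le \tfrac12$, we absorb again to get $\int_R^\infty r^n e^{-r^2/(2\delta)}\,dr\le 2\delta R^{n-1}e^{-R^2/(2\delta)}$. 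Summing the two pieces,
\[
J_1\le 2\delta R^{n-2}(R+d)e^{-R^2/(2\delta)}.
\]
Multiplying by $|\mathbb{S}^{n-1}|$ gives the constant $4\pi^{n/2}/\Gamma(n/2)$ in the statement.

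The only delicate point is the absorption step. It requires knowing in advance that the integrals are finite, which holds because of the Gaussian factor, and it is exactly where the restriction $\delta\le R^2/(2n)$ is used. The low-dimensional cases $n=1,2$, where the coefficient $n-2$ (respectively $n-1$ when $k=1$, $n=1$) is nonpositive, are handled by simply discarding the nonpositive term.
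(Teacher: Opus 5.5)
Your proof is correct and follows the route the paper indicates for this lemma: polar coordinates, then integration by parts on the radial Gaussian tail. The paper gives only that one-line sketch, and your absorption step (using $\delta\le R^2/(2n)$ to bound the remainder by half the integral, and discarding the nonpositive remainder when $n\le 2$) supplies exactly the missing details.
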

The following lemma can be found in \cite{MR5558}.
\begin{lemma}\label{cota-inf-normal} For all $\delta>0$ and $y>0$,
$$
\int_{y}^{\infty}e^{-\frac{t^2}{2\delta}}\,dt\geq \frac{\delta y}{y^2+\delta}e^{-\frac{y^2}{2\delta}}.
$$ 
\end{lemma}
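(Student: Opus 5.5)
The plan is to reduce the inequality to the classical lower bound for the Gaussian Mills ratio by rescaling, and then prove that bound with a monotonicity argument.

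\textbf{Rescaling.} Substitute $t=\sqrt{\delta}\,s$ and set $x:=y/\sqrt{\delta}>0$. The left-hand side becomes $\sqrt{\delta}\int_x^\infty e^{-s^2/2}\,ds$. On the right-hand side,
\[
\frac{\delta y}{y^2+\delta}=\sqrt{\delta}\,\frac{x}{x^2+1}
\qquad\text{and}\qquad
e^{-y^2/(2\delta)}=e^{-x^2/2}.
\]
After dividing by $\sqrt{\delta}$, it therefore suffices to show that for every $x>0$,
\[
\int_x^\infty e^{-s^2/2}\,ds\;\geq\;\frac{x}{1+x^2}\,e^{-x^2/2}.
\]

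\textbf{Monotonicity.} Define
\[
h(x):=\int_x^\infty e^{-s^2/2}\,ds-\frac{x}{1+x^2}\,e^{-x^2/2}, \qquad x\ge 0.
\]
Both terms tend to $0$ as $x\to\infty$, so $\lim_{x\to\infty}h(x)=0$. Differentiating the rational factor gives
\[
\frac{d}{dx}\Big(\frac{x}{1+x^2}e^{-x^2/2}\Big)=\frac{1-2x^2-x^4}{(1+x^2)^2}\,e^{-x^2/2}.
\]
Hence
\[
h'(x)=e^{-x^2/2}\Big(-1-\frac{1-2x^2-x^4}{(1+x^2)^2}\Big)=-\frac{2\,e^{-x^2/2}}{(1+x^2)^2}<0 .
\]
So $h$ is strictly decreasing on $[0,\infty)$ with limit $0$ at infinity. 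It follows that $h(x)>0$ for all $x>0$, which is the claim.

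The only step needing care is the algebra in $h'$. The numerator simplifies as
\[
-(1+x^2)^2-(1-2x^2-x^4)=-2,
\]
and this cancellation is what makes the sign of $h'$ evident. No other obstacle is expected.
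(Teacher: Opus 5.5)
Your proof is correct. The substitution $t=\sqrt{\delta}\,s$, $x=y/\sqrt{\delta}$ is right. The derivative of $\frac{x}{1+x^2}e^{-x^2/2}$ is as you state, and the numerator does collapse to $-(1+x^2)^2-(1-2x^2-x^4)=-2$. A strictly decreasing function with limit $0$ at $+\infty$ is positive, so the bound follows.

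The paper gives no argument of its own for this lemma; it simply cites Gordon's classical 1941 bound on Mills' ratio. Your proposal therefore supplies a self-contained proof where the paper relies on the literature.

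Your monotonicity argument verifies a bound that is already known rather than deriving it. If you want a derivation that shows where the factor $\frac{\delta y}{y^2+\delta}$ comes from, integration by parts works directly in the original variables, with no rescaling. Set $G(y):=\int_y^\infty e^{-t^2/(2\delta)}\,dt$. Then $G(y)=\frac{\delta}{y}e^{-y^2/(2\delta)}-\delta\int_y^\infty t^{-2}e^{-t^2/(2\delta)}\,dt$. Bounding $t^{-2}\le y^{-2}$ gives $G(y)\ge \frac{\delta}{y}e^{-y^2/(2\delta)}-\frac{\delta}{y^2}G(y)$, which rearranges to the claim (strictly, since $t^{-2}<y^{-2}$ for $t>y$).

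This is the same identity the paper writes down in Claim~7 of the proof of Theorem~\ref{prop-C21}, where the remainder $r(A(z),\delta)$ is exactly this correction term. That route would make the appendix self-contained with essentially no extra work. Your version has the merit of being a purely mechanical check once the candidate bound is in hand.
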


\section{Proof of Theorem \ref{prop-C21}}\label{proof43}

\textbf{Step 1: Initialization}\\
Set $p:=\operatorname{proj}_C(x)$, $d:=d_{C}(x)$, and $\nu:=\frac{x-p}{d}$. After translating
and rotating, we may assume that $p=0$, $\nu=e_n$, and $x=de_n$. For $y\in \mathbb{R}^n$,
write $y=(z,t)\in \mathbb{R}^{n-1}\times \mathbb{R}$. Let $H=\nabla^2 h(0)$ (hence $H\preceq 0$ by concavity, and $\Vert H\Vert \leq L$). By Taylor's theorem with Lipschitz Hessian, for all $z\in \mathbb{B}_{\rho}^{n-1}$.
\begin{equation}\label{Taylor3}
h(z)=\frac{1}{2}\langle Hz,z\rangle+r(z), \quad \vert r(z)\vert \leq \frac{M}{6}\Vert z\Vert^3,
\end{equation}
and also
\begin{equation}\label{eq:h_quadratic}
|h(z)|\le \frac{L}{2}\|z\|^2 \textrm{ for all } z\in \mathbb{B}_{\rho}^{n-1}.
\end{equation}
Moreover, by convexity of $C$ and the fact that $e_n$ is an outward normal at $0$ imply that the supporting  hyperplane is  $\{t=0\}$, hence $h(z)\leq 0$ on $\mathbb{B}_{\rho}^{n-1}$.\\
Define (see Figure \ref{fig:convexo-a})
$$
C_{\operatorname{loc}}:=C\cap \big(\mathbb{B}^{n-1}_{\rho}\times (-\rho,\rho)\big) \textrm{ and } C_{\operatorname{tail}}:=C\setminus C_{\operatorname{loc}}.
$$
Let us define
\begin{equation*}
\begin{aligned}
N_{\operatorname{loc}} &:= \int_{C_{\operatorname{loc}}} y e^{-\frac{\| y-x\|^2}{2\delta}} \, dy, &
N_{\operatorname{tail}} &:= \int_{C_{\operatorname{tail}}} y e^{-\frac{\| y-x\|^2}{2\delta}} \, dy, \\
D_{\operatorname{loc}} &:= \int_{C_{\operatorname{loc}}} e^{-\frac{\| y-x\|^2}{2\delta}} \, dy, &
D_{\operatorname{tail}} &:= \int_{C_{\operatorname{tail}}} e^{-\frac{\| y-x\|^2}{2\delta}} \, dy.
\end{aligned}
\end{equation*}
With the above definitions, we observe that 
$$
p_{\delta}(x):=\frac{N_{\operatorname{loc}}+N_{\operatorname{tail}}}{D_{\operatorname{loc}}+D_{\operatorname{tail}}}.
$$

\noindent \textbf{Step 2: Bounds on $D_{\operatorname{tail}}$ and $D_{\operatorname{loc}}$.}\\
We observe that $C_{\operatorname{loc}}\subset \mathbb{B}_{\rho}^{n-1}\times (-\rho,0]$ and if $y\in C_{\operatorname{tail}}$, then either $\Vert z\Vert \geq \rho$ or $t\leq -\rho$. In both cases,  
\begin{equation}\label{tail-inequality}
\Vert y-x\Vert^2=\Vert z\Vert^2+(t-d)^2\geq \rho^2+d^2 \textrm{ for } y\in C_{\operatorname{tail}}.    
\end{equation}
\begin{figure}[h]
  \centering
    \includegraphics{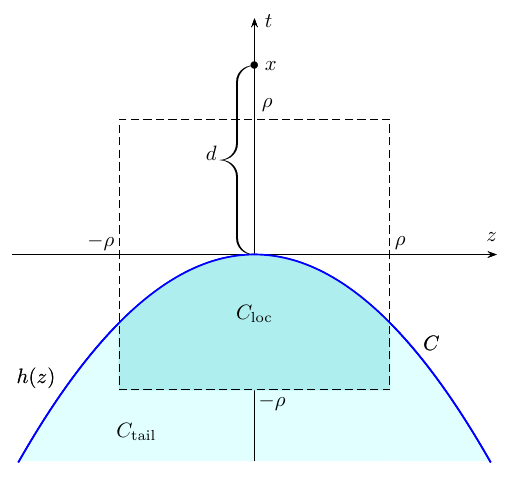}
  \caption{Local chart of $\partial C$ and the sets $C_{\operatorname{loc}}$ (pale turquoise) and $C_{\operatorname{tail}}$ (light cyan).}
      \label{fig:convexo-a}
\end{figure} 

\textbf{Claim 1:} Assume that $0<\delta\leq \frac{\rho^2+d^2}{2n}$ (with the convention that $1/0=+\infty$). Then
$$
D_{\operatorname{tail}}:=\int_{C_{\operatorname{tail}}} e^{-\frac{\Vert y-x\Vert^2}{2\delta}}dy\leq \frac{4\pi^{n/2}}{\Gamma\left(\frac{n}{2}\right)}(\rho^2+d^2)^{\frac{n-2}{2}} \delta e^{-\frac{\rho^2+d^2}{2\delta}}.
$$
\emph{Proof of Claim 1:} Since $C_{\operatorname{tail}}\subset \{ y\in \mathbb{R}^n: \Vert y-x\Vert^2 \geq \rho^2+d^2\}$, we have
$$
D_{\operatorname{tail}}=\int_{C_{\operatorname{tail}}} e^{-\frac{\Vert y-x\Vert^2}{2\delta}}dy\leq \int_{\{ y\in \mathbb{R}^n: \Vert y-x\Vert^2 \geq \rho^2+d^2\}} e^{-\frac{\Vert y-x\Vert^2}{2\delta}}dy.
$$
Applying Lemma~\ref{Lemma-integral2} with $R=\sqrt{\rho^2+d^2}$ yields the claim. \qed \newline
\noindent \textbf{Claim 2:} Assume that $0<\delta \leq  \min \left\{ \rho^2(1+dL),  \frac{2\rho(1+dL)}{L},  \frac{d\rho}{1 + \frac{dL}{2(1+dL)}} \right\}$. \\
Then
$$
D_{\operatorname{loc}}:=\int_{C_{\operatorname{loc}}} e^{-\frac{\Vert y-x\Vert^2}{2\delta}}dy
\geq \left[\psi(\delta) \cdot \mathcal{K}_{\operatorname{err}}(\delta)\cdot \frac{C_{\operatorname{vol}}(n)}{(1+dL)^{\frac{n-1}{2}}} \right]\cdot \delta^{\frac{n-1}{2}}\cdot e^{-\frac{d^2}{2\delta}},
$$
where 
\begin{equation*}
    \begin{aligned}
  \psi(\delta)&:=\inf_{z\in \mathbb{B}^{n-1}_{r_{\delta}}}\omega(z), \quad r_{\delta}:=\sqrt{\frac{\delta}{1+dL}},\\
  \omega(z)&:=\frac{\delta (d-h(z))}{(d-h(z))^2+\delta}-\frac{\delta}{\rho+d} \exp\left(-\frac{\left[(\rho+d)^2-(d-h(z))^2\right]}{2\delta}\right),\\
  \mathcal{K}_{\operatorname{err}}(\delta)&:=\exp\left(-\frac{L^2\delta}{8(1+dL)^2}\right),\\
 C_{\operatorname{vol}}(n)&:=\int_{\mathbb{B}^{n-1}}e^{-\frac{\Vert v\Vert^2}{2}}dv.
    \end{aligned}
\end{equation*}
\emph{Proof of Claim 2}: Since $\delta \leq \rho^2 (1+dL)$, we have that $r_{\delta}\leq \rho$. Set $A(z):=d-h(z)$ and observe that
\begin{itemize}
  \item By \eqref{eq:h_quadratic}, we have
  \[
  A(z)=d-h(z)\le d+\frac{L}{2}r_{\delta}^2
  =d+\frac{L\delta}{2(1+dL)}=:A_{\max}.
  \]
  \item Since \(0<\delta<\frac{2\rho(1+dL)}{L}\), it follows that \(A_{\max}<d+\rho\). Consequently,
  \[
  (d+\rho)^2-A(z)^2\ge (d+\rho)^2-A_{\max}^2.
  \]
  \item Since \(0<\delta<\frac{d\rho}{1+\frac{dL}{2(1+dL)}}\), we obtain the lower bound
  \begin{equation*}
      \begin{aligned}
          J(z)&:=\int_{A(z)}^{d+\rho} e^{-\frac{t^2}{2\delta}}\,dt\\
          &=\int_{A(z)}^{+\infty} e^{-\frac{t^2}{2\delta}}\,dt-\int_{\rho+d}^{\infty} e^{-\frac{t^2}{2\delta}}\,dt\\
          &\geq \frac{\delta A(z)}{A(z)^2+\delta}e^{-\frac{A(z)^2}{2\delta}}-\frac{\delta}{\rho+d}e^{-\frac{(\rho+d)^2}{2\delta}}\\
          &=e^{-\frac{A(z)^2}{2\delta}} \omega(z)\\
  &\geq \psi(\delta)\,e^{-\frac{A(z)^2}{2\delta}},
      \end{aligned}
  \end{equation*}
  where we have used Lemma \ref{cota-inf-normal}.
\end{itemize}
Therefore,
\begin{equation*}
    \begin{aligned}
        D_{\operatorname{loc}}&=\int_{z\in \mathbb{B}_{\rho}^{n-1}}e^{-\frac{\Vert z\Vert^2}{2\delta}}\left(\int_{-\rho}^{h(z)}e^{-\frac{(t-d)^2}{2\delta}}\,dt\right)\,dz\\
        &=\int_{z\in \mathbb{B}_{\rho}^{n-1}}e^{-\frac{\Vert z\Vert^2}{2\delta}}J(z)\,dz\\
        &\geq \int_{z\in \mathbb{B}_{r_{\delta}}^{n-1}} \psi(\delta)e^{-\frac{\Vert z\Vert^2+A(z)^2}{2\delta}}\,dz.
    \end{aligned}
\end{equation*}
Define \(E(z):=\|z\|^2 + A(z)^2\). Since \(\|z\|\le r_{\delta}\), it follows that
\[
E(z)\le d^2 + (1+dL)\|z\|^2 + \frac{L^2}{4}\|z\|^4
\le d^2 + (1+dL)\|z\|^2 + \frac{L^2}{4}\left(\frac{\delta}{1+dL}\right)^2.
\]
Hence,
\begin{equation*}
    \begin{aligned}
        D_{\operatorname{loc}}&\geq \psi(\delta)e^{-\frac{L^2\delta }{8 (1+dL)^2}} e^{-\frac{d^2}{2\delta}} \left(\frac{\delta}{1+dL}\right)^{\frac{n-1}{2}}\int_{\mathbb{B}^{n-1}}e^{-\frac{\Vert v\Vert^2}{2}}dv\\
        &=\left[\psi(\delta) \cdot \mathcal{K}_{\operatorname{err}}(\delta)\cdot \frac{C_{\operatorname{vol}}(n)}{(1+dL)^{\frac{n-1}{2}}} \right]\cdot \delta^{\frac{n-1}{2}}\cdot e^{-\frac{d^2}{2\delta}},
    \end{aligned}
\end{equation*}
which proves the claim.\\
\textbf{Claim 3:} If $0<\delta \leq  \delta_0:=\min \big\{ \rho^2(1+dL),  \frac{2\rho(1+dL)}{L},  \frac{d\rho}{1 + \frac{dL}{2(1+dL)}},\frac{\rho^2+d^2}{2n} \big\}$, then $\psi(\delta)>0$ and 
$$
\frac{D_{\operatorname{tail}}}{D_{\operatorname{loc}}}
\leq \left[\frac{\mathcal{K}_{\operatorname{geom}}}{\mathcal{K}_{\operatorname{err}}(\delta)}\right] \left(\frac{\delta}{\psi(\delta)}\right) \delta^{-\frac{(n-1)}{2}}e^{-\frac{\rho^2}{2\delta}},
$$
where $\mathcal{K}_{\operatorname{err}}(\delta)$ and $\psi(\delta)$ are given in Claim 2. Moreover, 
\begin{equation*}
    \begin{aligned}
\mathcal{K}_{\operatorname{geom}}&:=\frac{4\pi^{n/2}}{\Gamma\left(\frac{n}{2}\right) C_{\operatorname{vol}}(n)}(\rho^2+d^2)^{\frac{n-2}{2}}(1+\rho L)^{\frac{n-1}{2}}.
\end{aligned}
\end{equation*}
\emph{Proof of Claim 3}: We  observe that for $0<\delta \leq \delta_0$,
\begin{equation*}
    \begin{aligned}
    \omega(z)&=\frac{\delta A(z)}{A(z)^2+\delta}-\frac{\delta}{\rho+d} \exp\left(-\frac{\left[(\rho+d)^2-A(z)^2\right]}{2\delta}\right)\\
    &\geq \delta \left(\frac{A_{\operatorname{max}}}{A_{\operatorname{max}}^2+\delta}-\frac{1}{e(\rho+d)}\right)>0,
    \end{aligned}
\end{equation*}
which implies that $\psi(\delta)=\inf_{z\in \mathbb{B}^{n-1}_{r_{\delta}}}\omega(z)>0$. Finally, the result follows from Claims 1 and  2. \qed\\
\noindent \textbf{Claim 4}: Let $\delta_1:=\min\left\{ \delta_0,\frac{d^2}{3}, \frac{\rho^2}{2\ln\!\left(4\left(1+\frac{\rho}{d}\right)\right)}\right\}$. Then, for all $\delta \in (0,\delta_1)$, one has 
\[
\psi(\delta)\geq \frac{\delta}{2d}.
\]
\emph{Proof of Claim 4}: From the definition of $\omega(z)=T_1(z)-T_2(z)$, where
\begin{equation*}
    \begin{aligned}
    T_1(z):=\frac{\delta A(z)}{A(z)^2+\delta}\quad \textrm{ and } \quad  T_2(z):=\frac{\delta}{\rho+d} \exp\left(-\frac{\left[(\rho+d)^2-A(z)^2\right]}{2\delta}\right)
    \end{aligned}
\end{equation*}
Hence, on the one hand, if $\delta \leq d^2/3$, then $T_1(z)\geq \frac{3\delta}{4d}$. On the other hand, if $\delta \leq \frac{\rho^2}{2\ln\left(4(1+\frac{\rho}{d})\right)}$, then $T_2(z)\leq \frac{\delta}{4d}$. Finally,
$$
\psi(\delta)=\inf_{z\in \mathbb{B}^{n-1}_{r_{\delta}}} (T_1(z)-T_2(z))\geq \frac{3\delta}{4d}-\frac{\delta}{4d}=\frac{\delta}{2d}.
$$ \qed\\
With respect to the tangential/normal splitting induced by $y=(z,t)$, we write
$N_{\operatorname{loc}}=(N_{\operatorname{loc}}^{\tau},N_{\operatorname{loc}}^{\nu})$ and $N_{\operatorname{tail}}=(N_{\operatorname{tail}}^{\tau},N_{\operatorname{tail}}^{\nu})$. Hence,
\begin{equation*}
    \begin{aligned}
        N_{\operatorname{loc}}^{\tau}&=\int_{\mathbb{B}_{\rho}^{n-1}} ze^{-\frac{\Vert z\Vert^2}{2\delta}} \left(\int_{-\rho}^{h(z)}  e^{-\frac{(t-d)^2}{2\delta}}\,dt\right)\,dz\\
        N_{\operatorname{loc}}^{\nu}&=\int_{\mathbb{B}_{\rho}^{n-1}} e^{-\frac{\Vert z\Vert^2}{2\delta}} \underbrace{\left(\int_{-\rho}^{h(z)} t\, e^{-\frac{(t-d)^2}{2\delta}}\,dt\right)}_{=:I_{\nu}(z)}\,dz
    \end{aligned}
\end{equation*}
Moreover, we write $p_{\delta}(x)=(p_{\delta}^{\tau}(x),p_{\delta}^{\nu}(x))$, where 
\begin{equation}\label{decomposition-tn}
p_{\delta}^{\tau}(x)=\frac{N_{\operatorname{loc}}^{\tau}+N_{\operatorname{tail}}^{\tau}}{D_{\operatorname{loc}}+D_{\operatorname{tail}}} \quad p_{\delta}^{\nu}(x)=\frac{N_{\operatorname{loc}}^{\nu}+N_{\operatorname{tail}}^{\nu}}{D_{\operatorname{loc}}+D_{\operatorname{tail}}}.
\end{equation}
\noindent\textbf{Step 3: Bounding $N_{\operatorname{tail}}$.}\\
\textbf{Claim 5:} For all $0<\delta\leq \frac{d^2+\rho^2}{2n}$, one has 
$$
\Vert N_{\operatorname{tail}}\Vert\leq \frac{4\pi^{n/2}}{\Gamma\left(\frac{n}{2}\right)}\sqrt{\rho^2+d^2}(\rho^2+d^2)^{\frac{n-2}{2}} \delta e^{-\frac{\rho^2+d^2}{2\delta}}.
$$
\emph{Proof of Claim 5:} From \eqref{tail-inequality}, one has
\begin{equation*}
    \begin{aligned}
\Vert N_{\operatorname{tail}}\Vert &=\left\Vert \int_{C_{\operatorname{tail}}} ye^{-\frac{\Vert y-x\Vert^2}{2\delta}}\,dy\right\Vert \\
&\leq \int_{C_{\operatorname{tail}}} \Vert y\Vert e^{-\frac{\Vert y-x\Vert^2}{2\delta}}\,dy\\
&\leq \int_{\{ u \in \mathbb{R}^n : \Vert u\Vert \geq \sqrt{\rho^2+d^2}\}} (\Vert u\Vert +d)e^{-\frac{\Vert u \Vert^2}{2\delta}}du,
    \end{aligned}
\end{equation*}
which, by virtue of Lemma \ref{Lemma-integral2}, implies the claim.\\
\noindent\textbf{Step 4: Bounding the normal component $N_{\operatorname{loc}}^{\nu}$ of $N_{\operatorname{loc}}$.}\\
\textbf{Claim 6:}  The following formula holds:
\begin{equation}\label{eq-N}
N_{\operatorname{loc}}^{\nu}
= d\,D_{\operatorname{loc}}
-\delta I_{\operatorname{num}}
+\delta\, e^{-\frac{(\rho+d)^2}{2\delta}}\int_{\mathbb{B}_{\rho}^{n-1}} e^{-\frac{\|z\|^2}{2\delta}}\,dz,
\end{equation}
where $I_{\operatorname{num}}:=\int_{\mathbb{B}_{\rho}^{n-1}} e^{-\frac{\|z\|^2 + A(z)^2}{2\delta}}\,dz$.\\
\emph{Proof of Claim 6:} Recall that $N_{\operatorname{loc}}^{\nu}=\int_{\mathbb{B}_{\rho}^{n-1}} e^{-\frac{\|z\|^2}{2\delta}} I_{\nu}(z)\,dz$ with 
\begin{equation*}
    I_{\nu}(z)=\int_{-\rho}^{h(z)} t\, e^{-\frac{(t-d)^2}{2\delta}}\,dt=\underbrace{\int_{-(\rho+d)}^{-A(z)}u e^{-\frac{u^2}{2\delta}}\,du}_{\operatorname{Term}_1(z)}+d\underbrace{\int_{-(\rho+d)}^{-A(z)} e^{-\frac{u^2}{2\delta}}\,du}_{\operatorname{Term}_2(z)}.
\end{equation*}
Moreover, $\operatorname{Term}_1(z)=-\delta \big(e^{-\frac{A(z)^2}{2\delta}}-e^{-\frac{(\rho+d)^2}{2\delta}}\big)$ and $\operatorname{Term}_2(z)=J(z)$. Then, 
$$
 I_{\nu}(z)=-\delta e^{-\frac{A(z)^2}{2\delta}}+\delta e^{-\frac{(\rho+d)^2}{2\delta}}+dJ(z),
$$
which implies that
\begin{equation*}
    \begin{aligned}
N_{\operatorname{loc}}^{\nu}
&=\int_{\mathbb{B}_{\rho}^{n-1}} e^{-\frac{\|z\|^2}{2\delta}}
\left[d\,J(z)-\delta e^{-\frac{A(z)^2}{2\delta}}+\delta e^{-\frac{(\rho+d)^2}{2\delta}}\right]\,dz\\
 &=d\,D_{\operatorname{loc}}-\delta  I_{\operatorname{num}}+\delta\,e^{-\frac{(\rho+d)^2}{2\delta}}\int_{\mathbb{B}_{\rho}^{n-1}} e^{-\frac{\|z\|^2}{2\delta}}\,dz,
    \end{aligned}
\end{equation*}
which proves the claim. \qed\\
We observe that the third term in the right-hand side of equality \eqref{eq-N} is $\mathcal{O}(e^{-c/\delta})$,  $D_{\operatorname{tail}}=\mathcal{O}(\delta e^{-c/\delta})$ (see Claim~1), and $\Vert N_{\operatorname{tail}}\Vert=\mathcal{O}(\delta e^{-c/\delta})$ (see Claim~5). Hence, informally speaking, by  Claim 6, we get
$$
p_{\delta}^{\nu}(x)=\frac{N_{\operatorname{loc}}^{\nu}+N_{\operatorname{tail}}^{\nu}}{D_{\operatorname{loc}}+D_{\operatorname{tail}}} \approx \frac{N_{\operatorname{loc}}^{\nu}}{D_{\operatorname{loc}}}\approx  d-\delta \frac{I_{\operatorname{num}}}{D_{\operatorname{loc}}}.
$$
Therefore, we consider the estimator
\begin{equation*}
    \tilde{p}_{\delta}^{\nu}:=d-\delta \frac{I_{\operatorname{num}}}{D_{\operatorname{loc}}}.
\end{equation*}
Let us consider the measure $\eta_{\delta}$ defined on $\mathbb{B}^{n-1}_{\rho}$ whose density is proportional to 
$$e^{-\frac{\Vert z\Vert^2+(d-h(z))^2}{2\delta}}.$$
\textbf{Claim 7:}  Assume that $0<\delta\leq \min \left\{ \rho^2(1+dL),  \frac{2\rho(1+dL)}{L},  \frac{d\rho}{1 + \frac{dL}{2(1+dL)}} \right\}$. Then there exists a function $r(A(z),\delta)$ such that $0<r(A(z),\delta)<\frac{\delta}{A(z)^2}$ and
\begin{equation*}
    \frac{I_{\operatorname{num}}}{D_{\operatorname{loc}}}=\frac{1}{\delta \mathbb{E}_{\eta_{\delta}}\left[\frac{1-r(A(z),\delta)}{A(z)}\right]-\Delta(\delta)} \quad \textrm{ for all } 0<\delta \leq \delta_1,
\end{equation*}
where
$$
0\leq \Delta(\delta)\leq \frac{(1+dL)^{\frac{n-1}{2}} }{\mathcal{K}_{\operatorname{err}} \cdot (\rho +d)} \rho^{n-1}  \frac{\operatorname{Vol}(\mathbb{B}^{n-1})}{\int_{\mathbb{B}^{n-1}} e^{-\frac{\Vert v\Vert^2}{2}}dv} \cdot  \delta^{\frac{3-n}{2}}e^{-\frac{\rho^2+2d\rho}{2\delta}}=\mathcal{O}( \delta^{\frac{3-n}{2}} e^{-c/\delta}).
$$
\emph{Proof of Claim 7:} Let us observe that 
\begin{equation*}
\begin{aligned}
J(z)
&:=\int_{A(z)}^{d+\rho} e^{-\frac{t^2}{2\delta}}\,dt \\
&=\int_{A(z)}^{\infty} e^{-\frac{t^2}{2\delta}}\,dt-\int_{d+\rho}^{\infty} e^{-\frac{t^2}{2\delta}}\,dt \\
&=\frac{\delta}{A(z)}e^{-\frac{A(z)^2}{2\delta}}
\big(1-\underbrace{A(z)e^{\frac{A(z)^2}{2\delta}}\int_{A(z)}^{\infty} \frac{1}{t^2}e^{-\frac{t^2}{2\delta}}\,dt}_{=:r(A(z),\delta)}\big)-\int_{d+\rho}^{\infty} e^{-\frac{t^2}{2\delta}}\,dt.
\end{aligned}
\end{equation*}
Moreover, since $t\ge A(z)$ on $[A(z),\infty)$, we have $1/t^2 \le 1/A(z)^2$. Using this bound and the
standard Gaussian tail estimate (obtained by integration by parts), for all $w>0$
\begin{equation}\label{Mills}
\int_{w}^{\infty} e^{-\frac{t^2}{2\delta}}\,dt \le \frac{\delta}{w}e^{-\frac{w^2}{2\delta}},    
\end{equation}
we obtain
\begin{equation*}
\begin{aligned}
0<r(A(z),\delta)&=A(z)e^{\frac{A(z)^2}{2\delta}}\int_{A(z)}^{\infty} \frac{1}{t^2}e^{-\frac{t^2}{2\delta}}\,dt\\
&\leq A(z)e^{\frac{A(z)^2}{2\delta}}\cdot \frac{1}{A(z)^2}\int_{A(z)}^{\infty} e^{-\frac{t^2}{2\delta}}\,dt\\
&\leq A(z)e^{\frac{A(z)^2}{2\delta}}\cdot \frac{1}{A(z)^2}\left(\frac{\delta}{A(z)}e^{-\frac{A(z)^2}{2\delta}}\right)\\
&=\frac{\delta}{A(z)^2}.
\end{aligned}
\end{equation*}
Therefore,
\begin{equation*}
    \begin{aligned}
 \frac{I_{\operatorname{num}}}{D_{\operatorname{loc}}}&=\frac{\int_{\mathbb{B}^{n-1}_{\rho}} e^{-\frac{\Vert z\Vert^2+A(z)^2}{2\delta}}\,dz }{\int_{\mathbb{B}^{n-1}_{\rho}} e^{-\frac{\Vert z\Vert^2}{2\delta}}J(z)\,dz}\\
 &=\frac{1}{\delta \mathbb{E}_{\eta_{\delta}}\left[\frac{1-r(A(z),\delta)}{A(z)}\right]-\Delta(\delta)},
     \end{aligned}
\end{equation*}
where
$$
\Delta(\delta):=\frac{\int_{\rho+d}^{\infty} e^{-\frac{u^2}{2\delta}}\,du \cdot \int_{\mathbb{B}^{n-1}_{\rho}} e^{-\frac{\Vert z\Vert^2}{2\delta}}\,dz}{\int_{\mathbb{B}^{n-1}_{\rho}} e^{-\frac{\Vert z\Vert^2+A(z)^2}{2\delta}}\,dz}.
$$
On the one hand, by inequality \eqref{Mills}, we observe that the numerator of $\Delta(\delta)$ is bounded by
$$
\frac{\delta}{\rho+d}e^{-\frac{(\rho+d)^2}{2\delta}} \operatorname{Vol}(\mathbb{B}^{n-1}_{\rho}).
$$
On the other hand, the analysis for the denominator is similar to the analysis of $D_{\operatorname{loc}}$ (see Claim 2). Hence,
$$
\int_{\mathbb{B}^{n-1}_{\rho}} e^{-\frac{\Vert z\Vert^2+A(z)^2}{2\delta}}\,dz\geq \mathcal{K}_{\operatorname{err}}\cdot e^{-\frac{d^2}{2\delta}} \left(\frac{\delta}{1+dL}\right)^{\frac{n-1}{2}}  \int_{\mathbb{B}^{n-1}}e^{-\frac{\Vert v\Vert^2}{2}}dv.
$$
Combining the above inequalities, we get the claim. \qed\\
\textbf{Claim 8:} We have
\[
\tilde{p}_{\delta}^{\nu}
=
\mathbb{E}_{\eta_{\delta}}[h(z)]
- d\,\mathbb{E}_{\eta_{\delta}}\!\left[r(A(z),\delta)\right] 
+ O(\delta^{2}) \textrm{ as } \delta \to 0^+.
\]
Moreover,  
$$
\vert \tilde{p}_{\delta}^{\nu}\vert \leq \left(\frac{L(n-1)}{2}+\frac{1}{d}\right)\delta +O(\delta^{3/2}) \textrm{ as } \delta \to 0^+.
$$
\emph{Proof of Claim 8:} Let $g_{\delta}(z)=\frac{1-r(A(z),\delta)}{A(z)}$. Then,  by virtue of Claim 7, we get
\begin{equation*}
    \begin{aligned}
      \tilde{p}_{\delta}^{\nu}&=d-\delta \frac{I_{\operatorname{num}}}{D_{\operatorname{loc}}}
      =d- \frac{\delta }{\delta \mathbb{E}_{\eta_{\delta}}[g_{\delta}(z)]}+O(\delta^{\frac{1-n}{2}}e^{-c/\delta}).
    \end{aligned}
\end{equation*}
Moreover, by Taylor's theorem,
\begin{equation*}
    \begin{aligned}
    \frac{1}{\mathbb{E}_{\eta_{\delta}}[g_{\delta}(z)]}=\frac{1}{\frac{1}{d}+\mathbb{E}_{\eta_{\delta}}\left[g_{\delta}(z)-\frac{1}{d}\right]}=d-d^2 \mathbb{E}_{\eta_{\delta}}\left[g_{\delta}(z)-\frac{1}{d}\right]+O(\delta^2).
    \end{aligned}
\end{equation*}
Therefore, 
$$
 \tilde{p}_{\delta}^{\nu}= \mathbb{E}_{\eta_{\delta}}[h(z)]-d\,\mathbb{E}_{\eta_{\delta}}[r(A(z),\delta)]+O(\delta^2).
$$
Finally, using that $\vert h(z)\vert \leq \frac{L}{2}\Vert z\Vert^2$ and $\mathbb{E}_{\eta_{\delta}}[\Vert z\Vert^2]\leq (n-1)\delta +O(\delta^{3/2})$, we obtain that 
$$
\vert \tilde{p}_{\delta}^{\nu}\vert \leq \left(\frac{L(n-1)}{2}+\frac{1}{d}\right)\delta+O(\delta^{3/2}),
$$
which proves the claim.\\
\noindent\textbf{Step 4: Bounding the tangential component $N_{\operatorname{loc}}^{\tau}$ of $N_{\operatorname{loc}}$.}\\
\textbf{Claim 9:} For all $0<\delta \leq \delta_1$
$$
\Vert N_{\operatorname{loc}}^{\tau}\Vert \leq \frac{M(n^2-1)}{6} (2\pi \delta)^{\frac{n-1}{2}} \delta^2 e^{-\frac{d^2}{2\delta}}.
$$
\emph{Proof of Claim 9:} Recall that
$$
N_{\operatorname{loc}}^{\tau}= \int_{\mathbb{B}_{\rho}^{n-1}} ze^{-\frac{\Vert z\Vert^2}{2\delta}}J(z)dz.
$$
Since $\mathbb{B}_{\rho}^{n-1}$ is symmetric and $e^{-\frac{\Vert z\Vert^2}{2\delta}}$ is even, 
$$
N_{\operatorname{loc}}^{\tau}=\frac{1}{2}\int_{\mathbb{B}_{\rho}^{n-1}} ze^{-\frac{\Vert z\Vert^2}{2\delta}} (J(z)-J(-z))dz.
$$
Hence
$$
\Vert N_{\operatorname{loc}}^{\tau}\Vert \leq \frac{1}{2}\int_{\mathbb{B}_{\rho}^{n-1}} \Vert z\Vert e^{-\frac{\Vert z\Vert^2}{2\delta}} \vert J(z)-J(-z)\vert dz.
$$
Set $F(u):=\int_{-\rho}^u e^{-\frac{(t-d)^2}{2\delta}}dt$. Hence, $J(z)=F(h(z))$ and $F^{\prime}(u)=e^{-\frac{(u-d)^2}{2\delta}}$.
By the mean value theorem, for each $z$ there exists $\theta_z$ between $h(z)$ and $h(-z)$ such that
$$
J(z)-J(-z)=F^{\prime}(\theta_z)(h(z)-h(-z)).
$$
Moreover, since $h(z)\leq 0$ and $h(-z)\leq 0$, we have $\theta_z\leq 0$, hence $(\theta_z-d)^2\geq d^2$ and thus
$$
\vert F^{\prime}(\theta_z)\vert \leq e^{-\frac{d^2}{2\delta}}.
$$
Therefore,
$$
\vert J(z)-J(-z)\vert\leq e^{-\frac{d^2}{2\delta}} \vert h(z)-h(-z)\vert.
$$
From \eqref{Taylor3}, we get that
$$
h(z)=\frac{1}{2}\langle Hz,z\rangle +r(z), \quad \vert r(z)\vert \leq \frac{M}{6}\Vert z\Vert^3.
$$
Since the quadratic term is even, 
$$
h(z)-h(-z)=r(z)-r(-z),
$$
and hence $\vert J(z)-J(-z)\vert \leq \frac{M}{3}\Vert z\Vert^3 e^{-\frac{d^2}{2\delta}}$. Using the above estimates, we get
$$
\Vert N_{\operatorname{loc}}^{\tau}\Vert \leq \frac{M}{6}e^{-\frac{d^2}{2\delta}}\int_{\mathbb{B}_{\rho}^{n-1}} \Vert z\Vert^4 e^{-\frac{\Vert z\Vert^2}{2\delta}}dz\leq  \frac{M}{6}e^{-\frac{d^2}{2\delta}}\int_{\mathbb{R}^{n-1}} \Vert z\Vert^4 e^{-\frac{\Vert z\Vert^2}{2\delta}}dz
$$
If $W\sim \mathcal{N}(0,\delta I_{n-1})$, then
$$
\int_{\mathbb{R}^{n-1}} \Vert z\Vert^4 e^{-\frac{\Vert z\Vert^2}{2\delta}}dz=(2\pi \delta)^{\frac{n-1}{2}} \mathbb{E}\Vert W\Vert^4, \quad \mathbb{E}\Vert W\Vert^4= \delta^2 (n^2-1),
$$
which yields the claim. \qed\\
\noindent\textbf{Claim 10:} Under the assumptions of Theorem \ref{prop-C21}, we have
$$
\Vert p_{\delta}^{\tau}(x)\Vert \leq \frac{M(n^2-1)d}{6}\delta +O(\delta^2) \textrm{ as } \delta \to 0^{+}.
$$
\emph{Proof of Claim 10:}  By \eqref{decomposition-tn}, 
$$
\Vert p_{\delta}^{\tau}(x)\Vert \leq \frac{\Vert N_{\operatorname{loc}}^{\tau}\Vert }{D_{\operatorname{loc}}+D_{\operatorname{tail}}}+\frac{\Vert N_{\operatorname{tail}}\Vert }{D_{\operatorname{loc}}+D_{\operatorname{tail}}}\leq \frac{\Vert N_{\operatorname{loc}}^{\tau}\Vert }{D_{\operatorname{loc}}}+\frac{\Vert N_{\operatorname{tail}}\Vert }{D_{\operatorname{loc}}}.
$$
Using Claim 9 and Claim 2-4, for $0<\delta \leq \delta_1$ we have
\begin{equation*}
    \begin{aligned}
        \frac{\Vert N_{\operatorname{loc}}^{\tau}\Vert }{D_{\operatorname{loc}}}&\leq \frac{\frac{M(n^2-1)}{6}(2\pi\delta)^{\frac{n-1}{2}}\delta^2 e^{-\frac{d^2}{2\delta}}}
{\psi(\delta)\mathcal K_{\rm err}(\delta)\frac{C_{\rm vol}(n)}{(1+dL)^{\frac{n-1}{2}}}\delta^{\frac{n-1}{2}}e^{-\frac{d^2}{2\delta}}}\\
&=
\frac{M(n^2-1)}{6}\,
\frac{(2\pi)^{\frac{n-1}{2}}(1+dL)^{\frac{n-1}{2}}}{\mathcal K_{\rm err}(\delta)C_{\rm vol}(n)}
\cdot \frac{\delta^2}{\psi(\delta)}.
    \end{aligned}
\end{equation*}
By Claim 4, $\psi(\delta)\geq \delta/(2d)$, hence
\[
\frac{\|N_{\rm loc}^\tau\|}{D_{\rm loc}}
\le
\frac{M(n^2-1)d}{3}\,
\frac{(2\pi)^{\frac{n-1}{2}}(1+dL)^{\frac{n-1}{2}}}{\mathcal K_{\rm err}(\delta)C_{\rm vol}(n)}
\cdot \delta.
\]
Finally, Claim~5 and Claim~2--4 imply $\|N_{\rm tail}\|/D_{\rm loc}=O(e^{-c/\delta})$, which can be absorbed into
$O(\delta^2)$ as $\delta\to 0^+$. This completes the proof. \qed\\
\noindent\textbf{Step 5: Conclusion.}\\
Since $\operatorname{proj}_C(x)=0$ in the chosen coordinates, we have
\[
\|p_\delta(x)-\operatorname{proj}_C(x)\|=\|p_\delta(x)\|\le |p_\delta^\nu(x)|+\|p_\delta^\tau(x)\|.
\]
By Claim~8 (together with Claims~1,5,6 to control the tail and the exponentially small terms), we obtain
\[
|p_\delta^\nu(x)|\le \left(\frac{L(n-1)}{2}+\frac{1}{d}\right)\delta+O(\delta^{3/2}).
\]
By Claim~10,
\[
\|p_\delta^\tau(x)\|\le C_\tau\,\delta+O(\delta^2),
\qquad
C_\tau=\frac{M(n^2-1)d}{3}\,
\frac{(2\pi)^{\frac{n-1}{2}}(1+dL)^{\frac{n-1}{2}}}{C_{\rm vol}(n)}.
\]
Combining the two estimates yields the theorem. \qed


\end{document}